\documentclass{amsart}

\usepackage[latin1]{inputenc}
\usepackage{amsfonts}
\usepackage{eufrak}
\usepackage{amsmath}
\usepackage{amsthm}
\usepackage{a4wide}
\usepackage[all]{xy}
\usepackage{textcomp}

\parindent      0mm
\parskip        2mm

\newtheorem{tthm}{Theorem}[section]
\theoremstyle{defi}

\theoremstyle{ddefi}

\theoremstyle{prop}
\newtheorem{prop}[tthm]{Proposition}
\theoremstyle{pprop}
\newtheorem{pprop}[tthm]{Proposition}
\theoremstyle{rmk}

\theoremstyle{rrmk}

\theoremstyle{lemma}

\theoremstyle{llemma}
\newtheorem{llemma}[tthm]{Lemma}

\theoremstyle{ex}

\theoremstyle{cor}

\theoremstyle{qq}

\begin{document}

\title[Equivariant Homology and Orderability of Lens Spaces]
{Equivariant Homology for Generating Functions and Orderability of Lens Spaces}

\author{Sheila Sandon}
\address{Laboratoire de Math\'{e}matiques Jean Leray, Universit\'{e} de Nantes, 44322 Nantes, France}
\email{sheila.sandon@univ-nantes.fr}

\begin{abstract}
In her PhD thesis \cite{Milin} Milin developed a $\mathbb{Z}_k$-equivariant version of the contact homology groups constructed in \cite{EKP} and used it to prove a $\mathbb{Z}_k$-equivariant contact non-squeezing theorem. In this article we re-obtain the same result in the setting of generating functions, starting from the homology groups studied in \cite{mio}. As Milin showed, this result implies orderability of lens spaces.
\end{abstract}

\maketitle

\section{Introduction}

Let $(P,\eta)$ be a (cooriented) contact manifold. A contact isotopy $\phi_t$ of $P$ is said to be positive (respectively non-negative) if it is generated by a positive (non-negative) contact Hamiltonian. Geometrically this means that $\phi_t$ moves every point of $P$ in a direction which is positively transverse (or tangent) to the contact distribution. Consider now the universal cover $\widetilde{\text{Cont}_0(P,\xi)}$ of the identity component of the group of contactomorphisms of $P$. Recall that $\widetilde{\text{Cont}_0(P,\eta)}$ is defined to be the set of homotopy classes of paths $\phi_t$ of contactomorphisms of $P$ with fixed endpoints $\phi_0=\text{id}$ and $\phi_1$. The group structure can be defined equivalently either by composition of contactomorphisms or by juxtaposition of paths. We define a relation $\leq$ by saying that $[\phi_t]\leq [\psi_t]$ if the class $[\psi_t][\phi_t]^{-1}$ can be represented by a non-negative contact isotopy. This relation is always reflexive and transitive. The contact manifold $(P,\eta)$ is said to be \textit{orderable} if anti-symmetry also holds. In this case $\leq$ defines a (bi-invariant) partial order on $\widetilde{\text{Cont}_0(P,\eta)}$.\\
\\
The notion of orderability was introduced by Eliashberg and Polterovich in \cite{EP}. In the same paper the authors also proved that a contact manifold is orderable if and only if there are no positive contractible loops in the identity component of the group of contactomorphisms, and used this criterion to infer orderability of projective space  $\mathbb{R}P^{2n-1}$ from Givental's theory of the non-linear Maslov index \cite{G}. Since then other contact manifolds have been proved to be orderable: standard contact Euclidean space \cite{B}, 1-jet bundles \cite{CFP,CN}, cosphere bundles \cite{EKP,CN09} and lens spaces \cite{Milin}. On the other hand, the first examples of non-orderable contact manifolds were found by Eliashberg, Kim and Polterovich \cite{EKP}. In particular they showed that the orderability question is sensitive to the topology of the underlying manifold, by proving that the standard contact sphere $S^{2n-1}$ (for $n>1$) is non-orderable even though it is the double cover of the orderable contact manifold $\mathbb{R}P^{2n-1}$. Another surprising result contained in \cite{EKP} is the discovery that orderability is related to some contact squeezing and non-squeezing phenomena. \\
\\
Given open domains $\mathcal{U}_1$ and $\mathcal{U}_2$ in a contact manifold $(V,\xi)$ we say that $\mathcal{U}_1$ can be squeezed into $\mathcal{U}_2$ if there exists a contact isotopy $\phi_t$, $t\in [0,1]$, from the closure $\overline{\mathcal{U}_1}$ of $\mathcal{U}_1$ to $V$ such that $\phi_0$ is the identity and $\phi_1\,(\overline{\mathcal{U}_1})\subset \mathcal{U}_2$. The key to relate the two notions of orderability and contact squeezing is a geometric construction \cite{EKP} that, given a Liouville manifold $(M,\omega=-d\lambda)$ with non-orderable ideal contact boundary $\big(\,P,\eta=\text{ker}(\lambda)\,\big)$, uses a positive contractible loop of contactomorphisms of $P$ to squeeze domains in the prequantization $\big(V=M \times S^1, \xi=\text{ker}(d\theta-\lambda)\big)$ of $M$. A particular application of this idea is the following squeezing result in the contact manifold $\big(\mathbb{R}^{2n}\times S^1,\text{ker}\,(dz-\frac{ydx-xdy}{2})\big)$. We will denote by $B(R)$ the domain $\{\,\pi \sum_{i=1}^n x_i^{\,2} + y_i^{\,2} <R\,\}$ in $ \mathbb{R}^{2n}$. Moreover, given a domain $\mathcal{U}$ of $\big(\mathbb{R}^{2n},\omega_0=dx\wedge dy\big)$ we will denote by $\widehat{\mathcal{U}}$ its prequantization $\mathcal{U} \times S^1$ in $\mathbb{R}^{2n} \times S^1$.

\begin{tthm}[\cite{EKP}]\label{squeezing}
If $n>1$ then any domain of the form $\widehat{B(R)}$ with $R<1$ can be squeezed into $\widehat{B(R')}$ for $R'$ arbitrarily small.
\end{tthm}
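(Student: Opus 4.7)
My strategy is to deduce Theorem \ref{squeezing} from the non-orderability of the standard contact sphere $S^{2n-1}$ for $n>1$ (established in \cite{EKP}), via the Liouville/prequantization construction sketched in the introduction. Regard $(\mathbb{R}^{2n},\omega_0)$ as a Liouville manifold with primitive $\lambda=\tfrac{1}{2}(xdy-ydx)$; the ideal contact boundary is the standard $(S^{2n-1},\eta_{\mathrm{std}})$, and $(\mathbb{R}^{2n}\times S^1,\ker(dz-\lambda))$ is precisely its prequantization, in which the squeezing is to be performed. Under this identification the domain $B(R)$ is the sublevel set $\{\pi|z|^2<R\}$ of the Liouville Hamiltonian, and the threshold $R=1$ corresponds to the period of the Reeb flow on the unit sphere relative to the normalization of the $S^1$-factor.

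The first step is to invoke the Eliashberg--Polterovich criterion together with the non-orderability result from \cite{EKP} to produce a positive contractible loop $\{\phi_t\}_{t\in[0,1]}$ of contactomorphisms of $S^{2n-1}$ and a smooth contraction $\{\Phi_{s,t}\}$ with $\Phi_{0,t}=\mathrm{id}$ and $\Phi_{1,t}=\phi_t$. Using the Liouville structure one extends $\phi_t$ to a contact isotopy of the symplectization $\mathbb{R}^{2n}\setminus\{0\}$ that is homogeneous of degree zero (equivalently, commutes with radial Liouville scaling), and then lifts it to an ambient contact isotopy $\widehat{\phi}_t$ of the prequantization $\mathbb{R}^{2n}\times S^1$. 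Contractibility of the loop is crucial here: it guarantees that the lift closes up as a genuine contact isotopy with no $S^1$-monodromy, while positivity produces a controlled drift in the $S^1$-direction that can be exploited.

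The squeezing is then assembled by combining $\widehat{\phi}_t$ with radial contraction in $\mathbb{R}^{2n}$. Radial scaling alone is symplectic but \emph{not} a contactomorphism of $(\mathbb{R}^{2n}\times S^1,\ker(dz-\lambda))$, since $\lambda$ rescales quadratically while $z\in S^1=\mathbb{R}/\mathbb{Z}$ cannot be rescaled; the role of the positive contractible loop is precisely to absorb this $S^1$-twist, thereby converting a family of non-contact symplectic scalings into an honest contact isotopy of the prequantization. After cutting off the Hamiltonian outside a slightly larger ball one obtains a compactly supported contact isotopy of $\mathbb{R}^{2n}\times S^1$ that sends $\widehat{B(R)}$ into $\widehat{B(R')}$ for any desired $R'$, provided $R<1$; iterating the construction (or rescaling the generating Hamiltonian) makes $R'$ arbitrarily small.

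The main technical obstacle I anticipate is in the lift-and-compensation step: one must carefully construct $\widehat{\phi}_t$, verify that its $S^1$-drift exactly cancels the defect of radial scaling in the prequantization, and read off from the quantitative bookkeeping that the construction breaks down at $R=1$ (in accordance with the normalized Reeb period of $\partial B(1)$). Once this geometric lift is in place, the squeezing estimate follows by direct composition, and the hypothesis $n>1$ enters only through the input provided by \cite{EKP}.
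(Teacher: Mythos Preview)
Your proposal is correct and follows essentially the same route the paper indicates: the theorem is quoted from \cite{EKP}, and the paper's only remark on its proof is that it is obtained by applying the squeezing construction for prequantizations of Liouville manifolds with non-orderable ideal boundary to $M=\mathbb{R}^{2n}$, using an explicitly constructed positive contractible loop of contactomorphisms of $S^{2n-1}$---exactly the mechanism you outline. The one nuance is that \cite{EKP} actually exhibits the positive contractible loop explicitly rather than merely invoking non-orderability abstractly, but since by the Eliashberg--Polterovich criterion these are equivalent, your strategy is on target.
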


This result is proved by applying the squeezing construction to the special case of $M=\mathbb{R}^{2n}$ and using an explicitly constructed positive contractible loop of contactomorphisms of $S^{2n-1}$. Note that the proof does not work for $n=1$ because $S^1$ does not admit any positive contractible loop of contactomorphisms \footnote{Using the techniques in \cite{E} it can be proved that in dimension $2n+1=3$ it is never possible to squeeze $\widehat{B(R)}$ into a smaller $\widehat{B(R')}$. Some contact rigidity is also present in higher dimension: it was discovered by Eliashberg, Kim and Polterovich \cite{EKP} (and reproved in \cite{mio} using generating functions techniques) that $\widehat{B(R)}$ cannot be squeezed into $\widehat{B(R')}$ if $R'\leq k \leq R$ for some integer $k$.}. On the other hand, the same squeezing construction can also be used in the opposite direction, i.e. to infer non-existence of a positive contractible loop of contactomorphisms of $P$ (hence orderability of $P$) by proving non-squeezing results in $M \times S^1$. In particular, this was the strategy followed by Milin \cite{Milin} to prove orderability of lens spaces.\\
\\
Consider the lens space $L_k^{\phantom{k}2n-1}(m_0,\cdots,m_{n-1})$, which is defined to be the quotient of the unit sphere $S^{2n-1}$ in $\mathbb{C}^n$ by the $\mathbb{Z}_k$-action generated by the map 
\begin{equation}\label{e_action}
\tau_k: (w_0, \cdots, w_{n-1}) \mapsto \big(e^{2\pi im_0/k}w_0, \cdots, e^{2\pi im_{n-1}/k}w_{n-1} \big).
\end{equation}
In the following the coefficients $m_0$,$\cdots$, $m_{n-1}$ will play no role so we will drop them from the notation. Since the standard contact form $\lambda_0=\frac{ydx-xdy}{2}$ on $S^{2n-1}$ is invariant with respect to this action it descends to a contact form on $L_k^{\phantom{k}2n-1}$. To prove orderability of $L_k^{\phantom{k}2n-1}$, by the criterion given in \cite{EP} we need to show that there is no positive contractible loop in the identity component $\text{Cont}_0\big(L_k^{\phantom{k}2n-1}\big)$ of its contactomorphisms group. Note that the identity component $\text{Cont}_0^{\phantom{0}\mathbb{Z}_k}\big(S^{2n-1}\big)$ of the group of $\mathbb{Z}_k$-equivariant contactomorphisms of $S^{2n-1}$ is a connected $k$-fold covering of $\text{Cont}_0\big(L_k^{\phantom{k}2n-1}\big)$, and so it is enough to show that there is no positive contractible loop in $\text{Cont}_0^{\phantom{0}\mathbb{Z}_k}\big(S^{2n-1}\big)$. Note also that without loss of generality we may assume that $k$ is prime. \\
\\
Elaborating on the construction of Eliashberg, Kim and Polterovich, Milin showed that if there was a positive contractible loop in $\text{Cont}_0^{\phantom{0}\mathbb{Z}_k}\big(S^{2n-1},\xi\big)$ then this could be used as a tool for $\mathbb{Z}_k$-equivariant contact squeezing of domains in $\mathbb{R}^{2n}\times S^1$, where $\mathbb{Z}_k$ acts on $\mathbb{R}^{2n}\times S^1$ by the action (\ref{e_action}) in each $\mathbb{R}^{2n}$-fiber. More precisely, such a positive contractible loop could be used to squeeze $\widehat{B(R)}$ for some small $R$ into $\widehat{B(R')}$ for $R'$ arbitrarily small, by a compactly supported $\mathbb{Z}_k$-equivariant contact isotopy of $\mathbb{R}^{2n}\times S^1$. Thus, orderability of $L_k^{\phantom{k}2n-1}$ follows if we prove the following equivariant contact non-squeezing theorem.

\begin{tthm}\label{non-squeezing}
Given any $R$, the domain $\widehat{B(R)}$ cannot be squeezed by a $\mathbb{Z}_k$-equivariant contact isotopy of $\mathbb{R}^{2n}\times S^1$ into $\widehat{B(R')}$ for $R'$ arbitrarily small.
\end{tthm}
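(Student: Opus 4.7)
The plan mirrors Milin's approach in \cite{Milin}, but replaces the contact-homology setting by the generating-function homology of \cite{mio}. I build a $\mathbb{Z}_k$-equivariant refinement of the capacities introduced in \cite{mio}, prove that they are monotone under $\mathbb{Z}_k$-equivariant contact isotopies of $\mathbb{R}^{2n}\times S^1$, and use them to obstruct the squeezing. The first ingredient is an equivariant generating function: for any compactly supported $\mathbb{Z}_k$-equivariant contact isotopy $\phi=\{\phi_t\}$ of $\mathbb{R}^{2n}\times S^1$, the Chaperon--Chekanov construction produces a generating function $F:\mathbb{R}^{2n}\times S^1\times\mathbb{R}^N\to\mathbb{R}$, quadratic at infinity, whose critical points at value $c$ correspond to translated points of $\phi_1$ at level $c$. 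Averaging over $\mathbb{Z}_k$ acting diagonally on $\mathbb{R}^{2n}\times S^1\times\mathbb{R}^N$ (with a linear action on the auxiliary factor), one may take $F$ to be $\mathbb{Z}_k$-invariant.

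Applying the Borel construction to the filtration by sublevel sets $\{F\le a\}\subset\{F\le b\}$, suitably localized over a neighbourhood of $\overline{\mathcal{U}}$, gives $\mathbb{Z}_k$-equivariant homology groups $G^{[a,b]}_{\mathbb{Z}_k}(\phi,\mathcal{U})$. After passing to limits over neighbourhoods of $\overline{\mathcal{U}}$ and along the isotopy parameter, these depend only on $\mathcal{U}$ and yield invariants functorial under $\mathbb{Z}_k$-equivariant contact embeddings. They carry a natural $H^*(B\mathbb{Z}_k)$-module structure coming from the Borel construction, and multiplication by the degree-two generator $u\in H^*(B\mathbb{Z}_k)$ lets one extract a sequence of equivariant capacities $c_j^{\mathbb{Z}_k}(\mathcal{U})$, $j\ge 1$, monotone under $\mathbb{Z}_k$-equivariant contact embeddings.

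To compute these capacities on balls I take $\phi$ to be the lift to $\mathbb{R}^{2n}\times S^1$ of a small positive rotation of $\mathbb{R}^{2n}$. Here $F$ reduces to a quadratic form and the equivariant critical set on $\widehat{B(R)}$ admits an explicit Morse--Bott description; the computation yields non-trivial equivariant classes of arbitrarily high Borel degree with critical values that are positive multiples of $R/k$, giving a lower bound of the form $c_j^{\mathbb{Z}_k}(\widehat{B(R)})\ge jR/k$ (up to a constant depending only on $k$). If $\widehat{B(R)}$ could be $\mathbb{Z}_k$-equivariantly squeezed into $\widehat{B(R')}$ with $R'$ arbitrarily small, monotonicity would give $c_j^{\mathbb{Z}_k}(\widehat{B(R)})\le c_j^{\mathbb{Z}_k}(\widehat{B(R')})$ for every $j$, but the right-hand side can be made as small as we like by shrinking $R'$ (the relevant critical values on $\widehat{B(R')}$ are all controlled by $R'$), contradicting the lower bound above once $j$ is large enough. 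The main technical difficulty is the construction of the equivariant theory itself: one needs $\mathbb{Z}_k$-equivariant versions of the stabilization and uniqueness results for generating functions, together with finite-dimensional approximations of $E\mathbb{Z}_k$, so that the Borel-equivariant groups above are independent of all auxiliary choices and genuinely functorial under $\mathbb{Z}_k$-equivariant contact isotopies. Once that machinery is in place, monotonicity and the ball computation should follow the non-equivariant blueprint of \cite{mio} with modest modifications.
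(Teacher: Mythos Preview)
Your outline is in the right spirit but contains a genuine gap and also diverges from the paper's argument in a way worth recording.

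\textbf{The averaging step is wrong.} You claim that one obtains a $\mathbb{Z}_k$-invariant generating function by ``averaging over $\mathbb{Z}_k$ acting diagonally''. Averaging a generating function over a group action does not in general yield a generating function for the same Legendrian: the fiber-critical locus and the Legendrian it parametrizes are nonlinear in $F$, so $\frac{1}{k}\sum F\circ\tau_k^{\,j}$ has no reason to generate $\Gamma_\phi$. The paper avoids this entirely. It first replaces Bhupal's identification $\mathbb{R}^{2(2n+1)+1}\hookrightarrow J^1\mathbb{R}^{2n+1}$ (which is \emph{not} $\mathbb{Z}_k$-symmetric) by the symmetric embedding (\ref{formula}); with this choice $\gamma_\phi$ is $\mathbb{Z}_k$-equivariant whenever $\phi$ is (Lemma~\ref{ell}), and then the Chaperon--Th\'eret construction, applied step by step, produces a generating function that is already $\mathbb{Z}_k$-invariant with respect to a diagonal action (Proposition~\ref{exist}). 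A separate equivariant uniqueness argument (Proposition~\ref{equiv_uniq}) is needed to make the resulting groups well defined. None of this comes from averaging.

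\textbf{Your obstruction mechanism differs from the paper's.} You propose to extract a tower of equivariant capacities $c_j^{\mathbb{Z}_k}$ via the $H^*(B\mathbb{Z}_k)$-module structure and to argue by monotonicity of spectral numbers. The paper does not introduce capacities at all: it computes the equivariant symplectic homology of balls (Theorem~\ref{thm_balls_eq}), showing that for $\ast=2nl$ the group $G_{\mathbb{Z}_k,\ast}^{\;\;(a,\infty]}\big(B(R)\big)$ is $\mathbb{Z}_k$ for \emph{every} $R\ge a/l$ and that the inclusion-induced maps are isomorphisms on this range. The contradiction is then read off directly from a commutative diagram of homology groups (Section~\ref{ultima}): pick $l$ with $R'<1/l<R$, set $\ast=2nl$, and observe that the map factors through $G_{\mathbb{Z}_k,\ast}^{\;\;(1,\infty]}\big(\widehat{B(R')}\big)=0$ while the composite is an isomorphism. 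Your capacity route could in principle be made to work, but two further points in your sketch are off: the test isotopy cannot be a single ``small positive rotation'' (one needs an unbounded ordered sequence, as in Traynor's computation, to realize the inverse limit defining the homology of the domain); and the critical values on $B(R)$ are integer multiples of $R$, not of $R/k$---the $\mathbb{Z}_k$-action changes the topology of the critical submanifolds (from $S^{2n-1}$ to $L_k^{2n-1}$), not their critical values.
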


Theorem \ref{non-squeezing} was proved by Milin \cite{Milin} using a $\mathbb{Z}_k$-equivariant version of the theory of contact homology for domains of $\mathbb{R}^{2n}\times S^1$ that was developed in \cite{EKP}. In this article we will reprove Theorem \ref{non-squeezing} in the setting of generating functions, starting from the contact homology groups studied in \cite{mio}.\\
\\
In \cite{mio} we used the set-up of Bhupal \cite{B} to generalize to the contact case Traynor's construction \cite{T} of symplectic homology for Hamiltonian symplectomorphisms and domains in $\mathbb{R}^{2n}$. In this way we obtained homology groups $G_{\ast}^{\;\;(a,b]}\,(\phi)$ for compactly supported contactomorphisms of $\mathbb{R}^{2n}\times S^1$ isotopic to the identity, and homology groups $G_{\ast}^{\;\;(a,b]}\,(\mathcal{V})$ for domains of $\mathbb{R}^{2n}\times S^1$. Here $a$ and $b$ are \textit{integer} parameters. The group $G_{\ast}^{\;\;(a,b]}\,(\phi)$ is defined to be the relative homology of the sublevel sets $E^a$ and $E^b$ of a generating function $S:E\rightarrow\mathbb{R}$ of $\phi$, while $G_{\ast}^{\;\;(a,b]}\,(\mathcal{V})$ is obtained by considering the inverse limit of the homology groups of contactomorphisms supported in $\mathcal{V}$. In this article we will prove that if $\phi$ is a $\mathbb{Z}_k$-equivariant contactomorphism then it has a generating function $S:(\mathbb{R}^{2n}\times S^1)\times\mathbb{R}^{M(2n+1)}\rightarrow\mathbb{R}$ which is invariant with respect to the diagonal $\mathbb{Z}_k$-action on $(\mathbb{R}^{2n}\times S^1)\times\mathbb{R}^{M(2n+1)}$ (Proposition \ref{exist}). Moreover we will show that all such generating functions for $\phi$ are related by $\mathbb{Z}_k$-equivariant fiber preserving diffeomorphisms and $\mathbb{Z}_k$-invariant stabilization (Proposition \ref{equiv_uniq}). We will then define the $\mathbb{Z}_k$-equivariant contact homology group $G_{\mathbb{Z}_k,\,\ast}^{\;\;\;(a,b]}\,(\phi)$ of $\phi$ to be the equivariant relative homology of the sublevel sets of $S$. The $\mathbb{Z}_k$-equivariant contact homology $G_{\mathbb{Z}_k,\,\ast}^{\;\;\;(a,b]}\,(\mathcal{V})$ of a $\mathbb{Z}_k$-invariant domain $\mathcal{V}$ in $\mathbb{R}^{2n}\times S^1$ is then defined via a limit process, as in the non-equivariant case.\\
\\
The existence and uniqueness results for generating functions of $\mathbb{Z}_k$-equivariant contactomorphisms of $\mathbb{R}^{2n}\times S^1$ are obtained by adapting the arguments given in the non-equivariant case by Chaperon \cite{Ch} and Th\'eret \cite{Th, Th2}. An important ingredient is the new formula (\ref{formula}) for the Legendrian submanifold $\Gamma_{\phi}$ of $J^1(\mathbb{R}^{2n+1})$ associated to a contactomorphism $\phi$ of $\mathbb{R}^{2n+1}$. Indeed, the formula that was introduced by Bhupal \cite{B}, and used in \cite{mio}, does not preserve the $\mathbb{Z}_k$-action.\\
\\
In order to prove Theorem \ref{non-squeezing} we will need to calculate the equivariant homology of balls (Theorem \ref{thm_balls_eq}). We will do this by elaborating on the calculations given by Traynor. In \cite{T} she constructed an unbounded ordered sequence of Hamiltonian symplectomorphisms supported in a ball $B(R)$ in $\mathbb{R}^{2n}$, and obtained the symplectic homology of $B(R)$ by calculating the index and critical value of the critical submanifolds of the corresponding generating functions. As we will see, the fibers over $0$ and $\infty$ in the base manifold $\mathbb{R}^{2n}$ will play no role in the calculations and so, since $\mathbb{Z}_k$-action is free away from those fibers, the equivariant homology will be obtained essentially by just taking the quotient. In particular we will see that the critical submanifolds, that in the non-equivariant case are diffeomorphic to $S^{2n-1}$, in our case become diffeomorphic to $L_k^{\phantom{k}2n-1}$. This change in the topology of the critical submanifolds is responsible for the different behavior of the homology groups in the equivariant and non-equivariant case and thus for the fact that the contact squeezing of Theorem \ref{squeezing} cannot be performed in the equivariant case.\\
\\
Any bi-invariant partial order defined on a group $G$ gives rise to a numerical invariant, called \textit{relative growth}, for pairs of elements in $G$. The relative growth in turn can be used to associate to $G$ a canonical partially ordered metric space (see \cite{EP}). As observed in \cite{EP}, the notion of relative growth can be applied in particular to the contactomorphism group of an orderable contact manifold, giving rise to an invariant that can be considered as a contact generalization of the rotation number of a diffeomorphism of $S^1$. This was in fact the main motivation of Eliashberg and Polterovich in introducing the notion of orderability in contact topology. It would be interesting to study the relative growth and the geometry of the associated metric space in the special case of lens spaces. However we do not address this question in the present article.\\
\\
This article is organized as follows. In Section \ref{cont_hom} we recall from \cite{mio} the construction of contact homology for domains in $\mathbb{R}^{2n}\times S^1$. In Section \ref{equivariant} we study the equivariant case. In \ref{inv_gf} we prove existence and uniqueness of $\mathbb{Z}_k$-invariant generating functions, and in \ref{equivhom} we use these results to define the equivariant contact homology groups for $\mathbb{Z}_k$-invariant domains of $\mathbb{R}^{2n}\times S^1$, after recalling in \ref{eq} the classical construction of equivariant homology. In Section \ref{calc} we first present in \ref{balls} Traynor's calculation of the symplectic homology of balls \cite{T}, then we show in \ref{balls_eq} how to modify it in the $\mathbb{Z}_k$-equivariant case and finally in \ref{ultima} how to use the obtained results to prove Theorem \ref{non-squeezing}.

\subsection*{Acknowledgments} This work was mostly done during my PhD at Instituto Superior T\'{e}cnico in Lisbon, under the supervision of Miguel Abreu. I thank him for all his mathematical guidance and support. I am also very grateful to Gustavo Granja for his help on algebraic topology and to David Martinez Torres for many discussions, comments and suggestions. My understanding of the material presented in this article improved very much during my mathematical Tour de France in June 2009. In particular I thank Emmanuel Ferrand and David Th\'{e}ret for discussions on equivariant generating functions, and Fran\c{c}ois Laudenbach for pointing out many crucial problems and mistakes in my original approach. I also thank the referee for many useful remarks. My research was supported in Lisbon by an FCT graduate fellowship, and in Nantes by an ANR GETOGA postdoctoral fellowship.

\section{Contact Homology for Domains of $\mathbb{R}^{2n}\times S^1$}\label{cont_hom}

We start with some preliminaries on generating functions, referring to \cite{mio} and the bibliography therein for more details. Let $B$ be a closed manifold, and $S: E\rightarrow\mathbb{R}$ a function defined on the total space of a fiber bundle $p:E\rightarrow B$. We will assume that $dS: E\longrightarrow T^{\ast}E$ is transverse to $N_E:=\{\:(e,\eta)\in T^{\ast}E \;|\; \eta = 0 \;\text{on} \;\text{ker}\;dp\,(e)\:\}$, so that the set $\Sigma_S$ of fiber critical points is a submanifold of $E$ of dimension equal to the dimension of $B$. To any $e$ in $\Sigma_S$ we can associate an element $v^{\ast}(e)$ of $T^{\phantom{p}\ast}_{p(e)}B$ defined by $v^{\ast}(e)\,(X):=dS\,(\widehat{X})$ for $X \in T_{p(e)}B$, where $\widehat{X}$ is any vector in $T_eE$ with $p_{\ast}(\widehat{X})=X$. Then $i_S:\Sigma_S\longrightarrow T^{\ast}B$, $e\mapsto \big(p(e),v^{\ast}(e)\big)$ and $j_S:\Sigma_S\longrightarrow J^1B$, $e\mapsto \big(p(e),v^{\ast}(e),S(e)\big)$ are respectively a Lagrangian and a Legendrian immersion. $S:E \longrightarrow \mathbb{R}$ is called a \textit{generating function} for $i_S\,(\Sigma_S)$ and for $j_S\,(\Sigma_S)$. Note that, since $i_S^{\phantom{S}\ast}\, \lambda_{\text{can}}=d\,(S_{|\Sigma_S})$, $j_S\,(\Sigma_S)$ a lift to $J^1(B)$ of the exact Lagrangian submanifold $i_S\,(\Sigma_S)$ of $T^{\ast}B$. Note also that critical points of $S$ correspond to intersection points of $i_S\,(\Sigma_S)$ with the 0-section and of  $j_S\,(\Sigma_S)$ with the 0-wall. \\
\\
A generating function $S:E \longrightarrow \mathbb{R}$ is called \textit{quadratic at infinity} if $p:E \longrightarrow B $ is a vector bundle and if there exists a non-degenerate quadratic form $\mathcal{Q}_{\infty}: E \longrightarrow \mathbb{R}$ such that $dS-\partial_v\mathcal{Q}_{\infty}: E \longrightarrow E^{\ast}$ is bounded, where $\partial_v$ denotes the fiber derivative. Existence and uniqueness results for generating functions quadratic at infinity have been proved in the symplectic case by Sikorav \cite{S,S2}, using ideas of \cite{LS} and \cite{Ch1}, and by Viterbo \cite{V} and Th\'{e}ret \cite{Th2}. These results were then generalized to the contact case by Chaperon, Chekanov and Th\'{e}ret.

\begin{tthm}[\cite{Ch}, \cite{C}, \cite{Th}]\label{eugfcont}
Any Legendrian submanifold of $J^1B$ contact isotopic to the 0-section has a generating function quadratic at infinity, which is unique up to fiber-preserving diffeomorphism and stabilization.
\end{tthm}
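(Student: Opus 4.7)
The plan is to establish existence and uniqueness separately, using Chaperon's broken-geodesic construction for existence and a Moser-type deformation argument for uniqueness.

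For existence, let $\Lambda=\phi_1(0_B)\subset J^1B$, where $\phi_t$ is a contact isotopy with $\phi_0=\mathrm{id}$. First I would subdivide $[0,1]$ into intervals $[t_{i-1},t_i]$ so fine that each contactomorphism $\psi_i:=\phi_{t_i}\circ\phi_{t_{i-1}}^{-1}$ is sufficiently $C^1$-close to the identity that its graph (in a suitable chart) is generated by an elementary function $F_i$ defined on a vector bundle over $B$. A Chekanov-style composition formula then assembles the $F_i$ into a single function $S$ on a fibre bundle $E\to B$ whose fibres are the direct sum of the fibres of the $F_i$ together with auxiliary coordinates $q_i,p_i$ encoding the intermediate points of the broken path; the extra terms take the shape of pairings $\langle p_i,q_i-q_{i-1}\rangle$. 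By construction $j_S(\Sigma_S)=\Lambda$, and outside a compact set of the fibres each $F_i$ vanishes, so $S$ reduces to the non-degenerate quadratic form given by these pairings. After a linear change of fibre variables this is the required quadratic-at-infinity form.

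For uniqueness, let $S^0:E^0\to\mathbb{R}$ and $S^1:E^1\to\mathbb{R}$ be two GFQIs for $\Lambda$. First I would stabilise each with a non-degenerate quadratic form in auxiliary variables so that $E^0$ and $E^1$ have the same rank and their quadratic forms at infinity agree under a bundle isomorphism. Second, I would construct a one-parameter family $S^t$ of GFQIs for $\Lambda$ connecting (stabilisations of) $S^0$ and $S^1$; the existence construction applied to the constant isotopy $\phi_t\equiv\mathrm{id}$, combined with linear interpolation in fibre coordinates and (if necessary) one further stabilisation, yields such a family. Finally I would run a Moser argument: seek a fibre-preserving isotopy $\Phi_t$ of $E$ with $S^t\circ\Phi_t=S^0+c(t)$. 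Differentiating in $t$ produces the fibre-tangent equation
\begin{equation*}
 \partial_t S^t + dS^t(X_t) = c'(t).
\end{equation*}
Since $j_{S^t}(\Sigma_{S^t})=\Lambda$ is $t$-independent, $\partial_t S^t$ vanishes along $\Sigma_{S^t}$ up to an additive constant, and a partition-of-unity construction provides a fibre-tangent vector field $X_t$ solving the equation. The quadratic-at-infinity condition then guarantees that $X_t$ is bounded and complete, and that its flow preserves the quadratic structure at infinity, so the flow of $X_t$ provides the desired fibre-preserving diffeomorphism.

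The main obstacle is the uniqueness step, and within it the construction of the interpolating family $S^t$: two GFQIs for the same Legendrian need not lie in the same path-component of GFQIs without stabilisation, and keeping the fibre behaviour at infinity under control throughout the homotopy is precisely where the quadratic-at-infinity hypothesis is indispensable. Once such an $S^t$ is available, the Moser trick is essentially formal; the technical work is hidden in the stabilisation and interpolation steps carried out in \cite{V,Th2}.
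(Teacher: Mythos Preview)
The paper does not give its own proof of Theorem~\ref{eugfcont}; it is quoted as a result of Chaperon, Chekanov and Th\'eret and used as a black box. What the paper \emph{does} prove is the $\mathbb{Z}_k$-equivariant analogue (Propositions~\ref{exist} and~\ref{equiv_uniq}), and there it follows Th\'eret's scheme rather closely. So the relevant comparison is between your sketch and that scheme.

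Your existence argument is essentially the one the paper recalls in the proof of Proposition~\ref{exist}: subdivide the isotopy, produce a Greek generating function for each small piece, and compose via formula~(\ref{a}). That part is fine.

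Your uniqueness argument, however, diverges from Th\'eret's in a way that hides the real difficulty. You propose to build a path $S^t$ of GFQIs for the \emph{fixed} Legendrian $\Lambda$ joining $S^0$ to $S^1$, and then run Moser. But ``the existence construction applied to the constant isotopy, combined with linear interpolation'' does not produce such a path: the existence construction outputs one particular GFQI, not a path connecting two prescribed ones, and linear interpolation $sS^1+(1-s)S^0$ between two GFQIs for $\Lambda$ generally fails to generate $\Lambda$ for intermediate $s$. Th\'eret's proof (mirrored in Proposition~\ref{equiv_uniq}) avoids this by a different route: one first shows a path-lifting and homotopy-lifting property (Proposition~\ref{lift_prop}), uses it to prove that the set of $\phi$ for which uniqueness holds is stable under isotopy, and thereby \emph{reduces} the whole question to the case of the $0$-section. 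For the $0$-section one then argues separately, via the generalized Morse lemma and an analysis of the level sets, that any GFQI is equivalent to a non-degenerate quadratic form. The Moser step you describe (Lemma~\ref{vert_path}) is indeed used, but only on paths that are already known to exist; it is not the mechanism that produces the interpolation.

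In short: your existence sketch is correct and matches the literature; your uniqueness sketch has the Moser step right but misidentifies where the work lies. The interpolating family is not constructed directly --- one instead reduces to the $0$-section by path/homotopy lifting, and handles that case by a Morse-lemma argument.
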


Recall that a stabilization of a generating function $S:E\longrightarrow\mathbb{R}$ defined on the total space $E$ of a vector bundle over $B$ is a function of the form $S'=S+Q: E'=E\oplus F\longrightarrow \mathbb{R}$, where $F\longrightarrow B$ is a vector bundle and $Q:F\longrightarrow\mathbb{R}$ is a non-degenerate quadratic form. A generating function quadratic at infinity $S:E\longrightarrow\mathbb{R}$ is said to be \textit{special} if $E=B\times \mathbb{R}^N$ and $S=S_0+\mathcal{Q}_{\infty}$, where $S_0$ is compactly supported and $\mathcal{Q}_{\infty}$ is the same quadratic form on each fiber. Th\'{e}ret \cite{Th,Th2} proved that any generating function quadratic at infinity can be modified to a special one by applying fiber preserving diffeomorphism and stabilization. In the following we will always consider generating functions  which are quadratic at infinity, and we will assume that they are special whenever this is needed.\\
\\
The theory of generating functions has been applied by Viterbo \cite{V} to the case of compactly supported Hamiltonian symplectomorphisms of $\big(\,\mathbb{R}^{2n}\,,\,\omega=dx\wedge dy\,\big)$. He did this by associating to any $\phi$ in $\text{Ham}^c\,(\mathbb{R}^{2n})$ a Lagrangian submanifold $\Gamma_{\phi}$ of $T^{\ast}S^{2n}$, which is defined to be the compactified image of the composition $\gamma_{\phi}=\sigma\circ\text{gr}_{\phi}: \mathbb{R}^{2n}\rightarrow T^{\ast}\mathbb{R}^{2n}$ where $\text{gr}_{\phi}:\mathbb{R}^{2n}\longrightarrow\overline{\mathbb{R}^{2n}}\times\mathbb{R}^{2n}$ is the graph of $\phi$ and $\sigma: \overline{\mathbb{R}^{2n}}\times\mathbb{R}^{2n}\rightarrow T^{\ast}\mathbb{R}^{2n}$ the symplectomorphism that maps a point $(x,y, X, Y)$ to $(\frac{x+X}{2},\frac{y+Y}{2}, Y-y, x-X)$. This construction can be generalized to the contact case by associating to a contactomorphism $\phi$ of $\big(\mathbb{R}^{2n+1},\xi_0=\text{ker}\,(dz-\frac{ydx-xdy}{2})\big)$ a Legendrian submanifold $\Gamma_{\phi}$ of $J^1\mathbb{R}^{2n+1}$ which is defined as follows. Consider first the graph $\text{gr}_{\phi}:\mathbb{R}^{2n+1}\longrightarrow \mathbb{R}^{2(2n+1)+1}$, $q\mapsto (q,\phi(q),g(q))$ where $g:\mathbb{R}^{2n+1}\longrightarrow \mathbb{R}$ is the function given by $\phi^{\ast}(dz-\frac{ydx-xdy}{2})=e^g(dz-\frac{ydx-xdy}{2})$. Note that $\text{gr}_{\phi}$ is a Legendrian embedding, with respect to the contact structure $\text{ker}\,\big(e^{\theta}(dz-\frac{ydx-xdy}{2})-(dZ-\frac{YdX-XdY}{2})\big)$ on $\mathbb{R}^{2(2n+1)+1}$. We then define $\Gamma_{\phi}$ to be the image of the Legendrian embedding $\gamma_{\phi}=\sigma \circ \text{gr}_{\phi}: \mathbb{R}^{2n+1}\longrightarrow J^1\mathbb{R}^{2n+1}$, where $\sigma:\mathbb{R}^{2(2n+1)+1}\longrightarrow J^1\mathbb{R}^{2n+1}$ is the map 
\begin{equation}\label{formula}
(x,y,z,X,Y,Z,\theta)\mapsto \Big(\frac{e^{\frac{\theta}{2}}x+X}{2},\frac{e^{\frac{\theta}{2}}y+Y}{2},z, Y-e^{\frac{\theta}{2}}y, e^{\frac{\theta}{2}}x-X, e^{\theta}-1, Z-z+\frac{e^{\frac{\theta}{2}}(xY-yX)}{2}\Big).
\end{equation}
Note that $\sigma$ is a contact embedding, sending the diagonal $\{\,x=X\,,\,y=Y\,,\,z=Z\,,\,\theta=0\,\}$ to the 0-section \footnote{There are also other contact embeddings $\mathbb{R}^{2(2n+1)+1}\longrightarrow J^1\mathbb{R}^{2n+1}$ with this property, for example the one introduced by Bhupal \cite{B} and defined by
$(x,y,z,X,Y,Z,\theta)\mapsto \big(x,Y,z, Y-e^{\theta}y, x-X, e^{\theta}-1, xY-XY+Z-z\big)$.
Notice that this map generalizes the symplectomorphism $\overline{\mathbb{R}^{2n}}\times\mathbb{R}^{2n}\rightarrow T^{\ast}\mathbb{R}^{2n}$ defined by $(x,y,X,Y)\mapsto (x,Y,Y-y,x-X)$. Similarly there is also a contact embedding 
$\mathbb{R}^{2(2n+1)+1}\longrightarrow J^1\mathbb{R}^{2n+1}$ generalizing the symplectomorphism $\overline{\mathbb{R}^{2n}}\times\mathbb{R}^{2n}\rightarrow T^{\ast}\mathbb{R}^{2n}$ defined by $(x,y,X,Y)\mapsto (y,X, x-X,Y-y)$. For the purposes of this section we could equivalently use any of these formulas. However formula (\ref{formula}) is the only one which is symmetric with respect to the $\mathbb{Z}_k$-action that we will consider in the next section, and thus which is suitable for developing the $\mathbb{Z}_k$-equivariant theory of contact homology for domains in $\mathbb{R}^{2n}\times S^1$.}. Note moreover that $\Gamma_{\phi}$ can also be written as
$\Gamma_{\phi}=\Psi_{\phi}(\text{0-section})$, with $\Psi_{\phi}$ denoting the local contactomorphism of $J^1\mathbb{R}^{2n+1}$ defined by the diagram
\begin{equation}\label{diagram_c}
\xymatrix{
 \quad \mathbb{R}^{2(2n+1)+1} \quad \ar[r]^{\overline{\phi}} \ar[d]_{\sigma} &
 \quad \mathbb{R}^{2(2n+1)+1} \quad \ar[d]^{\sigma} \\
 \quad J^1\mathbb{R}^{2n+1} \quad \ar[r]_{\Psi_{\phi}} &  \quad J^1\mathbb{R}^{2n+1}}\quad
\end{equation}
where $\overline{\phi}$ is defined by $(p,P,\theta)\mapsto (p,\phi(P), g(P)+\theta)$. This shows in particular that if $\phi$ is contact isotopic to the identity then $\Gamma_{\phi}$ is contact isotopic to the 0-section. Moreover, if $\phi$ is 1-periodic in the $z$-coordinate and compactly supported in the $(x,y)$-plane, i.e. if $\phi$ is a compactly supported contactomorphism of $\mathbb{R}^{2n}\times S^1$, then $\Gamma_{\phi}$ can be seen as a Legendrian submanifold of $J^1(S^{2n}\times S^1)$. By Theorem \ref{eugfcont} we know thus that it has a (special) generating function $S: (S^{2n}\times S^1)\times\mathbb{R}^N\rightarrow \mathbb{R}$, which is unique up to fiber-preserving diffeomorphism and stabilization. In the following, by generating function of a contactomorphism $\phi$ of $\mathbb{R}^{2n}\times S^1$ we will always mean a generating function $S$ for the associated Legendrian submanifold $\Gamma_{\phi}$ of $J^1(S^{2n}\times S^1)$. The crucial property of $S$ is that its critical points correspond to \textit{translated points} of $\phi=(\phi_1,\phi_2,\phi_3)$, i.e. points $q=(x,y,z)$ such that $\phi_1(q)=x$, $\phi_2(q)=y$ and $g(q)=0$ (see \cite{mio}).\\
\\
Let $a$, $b$ be \textit{integer} numbers that are not critical values of $S$. The \textbf{contact homology} $G_{\ast}^{\;\;(a,b]}\,(\phi)$ of $\phi$ with respect to the parameters $a$ and $b$ is defined by
$$G_k^{\;\;(a,b]}\,(\phi):=H_{k+\iota}\,(E^b, E^a) $$
where $E$ denotes the domain of the generating function $S$, $E^a$ and $E^b$ the sublevel sets at $a$ and $b$, and $\iota$ the index of the quadratic at infinity part of $S$. We will also consider the groups $G_{\ast}^{\;\;(a,b]}\,(\phi)$ for $a=-\infty$ or $b=+\infty$ by defining $E^{+\infty}=E$ and $E^{-\infty}=E^c$ for $c$ sufficiently negative. It follows from the uniqueness part of Theorem \ref{eugfcont} that the $G_k^{\;\;(a,b]}\,(\phi)$ are well-defined, i.e. do not depend on the choice of the generating function. Moreover it was proved in \cite{mio} that these groups are invariant by conjugation.

\begin{pprop}\label{conjcont}
For any contactomorphism $\psi$ of $\mathbb{R}^{2n}\times S^1$ isotopic to the identity we have an induced isomorphism 
$$\psi_{\ast}: G_{\ast}^{\;\;(a,b]}\,(\psi\phi\psi^{-1})\longrightarrow G_{\ast}^{\;\;(a,b]}\,(\phi).$$
\end{pprop}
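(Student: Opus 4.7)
The plan is to join $\phi$ to $\psi\phi\psi^{-1}$ by a smooth family of contactomorphisms, obtain a smooth family of generating functions via parametric uniqueness, and conclude by Morse-theoretic deformation invariance of sublevel-set homology.

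First, I fix a contact isotopy $\{\psi_t\}_{t\in[0,1]}$ with $\psi_0=\mathrm{id}$ and $\psi_1=\psi$, and set $\phi_t:=\psi_t\phi\psi_t^{-1}$, a smooth path of compactly supported contactomorphisms connecting $\phi$ to $\psi\phi\psi^{-1}$. Using the cocycle identity $g_{\phi\chi}=g_\phi\circ\chi+g_\chi$ for the conformal factor one checks that the lift $\phi\mapsto\overline{\phi}$ defined above (\ref{diagram_c}) is multiplicative, so together with (\ref{diagram_c}) this forces $\Psi_{\phi_t}=\Psi_{\psi_t}\circ\Psi_\phi\circ\Psi_{\psi_t}^{-1}$. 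Consequently the Legendrians $\Gamma_{\phi_t}=\Psi_{\phi_t}(\text{0-section})\subset J^1(S^{2n}\times S^1)$ form a smooth Legendrian isotopy from $\Gamma_\phi$ to $\Gamma_{\psi\phi\psi^{-1}}$ through Legendrians contact-isotopic to the 0-section.

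Next, by the parametric version of Theorem \ref{eugfcont} (obtained by repeating the Chaperon--Chekanov--Th\'eret arguments with parameters), this Legendrian isotopy lifts to a continuous family of special generating functions $S_t:E\to\mathbb{R}$ on a common total space $E=(S^{2n}\times S^1)\times\mathbb{R}^N$, with $S_0$ generating $\Gamma_\phi$ and $S_1$ generating $\Gamma_{\psi\phi\psi^{-1}}$. Writing $E^c_t:=S_t^{-1}(-\infty,c]$, the standard Morse-theoretic deformation argument then yields
$$H_{k+\iota}(E^b_0,E^a_0)\;\cong\;H_{k+\iota}(E^b_1,E^a_1)$$
as soon as $a$ and $b$ are regular values of every $S_t$; this isomorphism is precisely the claimed $\psi_\ast$.

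The main obstacle is therefore to ensure that no critical value of $S_t$ crosses the integers $a$ or $b$ as $t$ varies. Critical points of $S_t$ correspond bijectively to translated points of $\phi_t$, and a direct computation using (\ref{formula}) shows that at a translated point $q=(x,y,z)$ the corresponding critical value equals the $z$-shift $\phi_{t,3}(q)-z$. A translated point with \emph{integer} $z$-shift is precisely a fixed point of $\phi_t$ on $\mathbb{R}^{2n}\times S^1$, and the integer shift itself is the winding number of the canonical lift $\widetilde{\phi_t}$ to $\mathbb{R}^{2n}\times\mathbb{R}$ determined by the isotopy from the identity. Because that canonical lift satisfies $\widetilde{\phi_t}=\widetilde{\psi_t}\widetilde{\phi}\widetilde{\psi_t}^{-1}$, the map $q\mapsto\widetilde{\psi_t}(q)$ produces a smooth bijection between the fixed-point sets of $\widetilde{\phi}$ and $\widetilde{\phi_t}$ that preserves winding numbers (the latter being rigid by integer-valuedness and continuity). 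Hence the set of integer critical values of $S_t$ is independent of $t$, $a$ and $b$ remain regular values throughout, and the deformation isomorphism above supplies the desired $\psi_\ast$.
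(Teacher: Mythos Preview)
Your argument follows essentially the same route as the paper, which here simply cites \cite{mio} and whose equivariant analogue (Proposition~\ref{equivconjcont}) carries out exactly the gradient-flow deformation of level sets along a one-parameter family $S_t$ of generating functions for $\psi_t\phi\psi_t^{-1}$.

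One small imprecision to fix: a translated point with \emph{integer} $z$-shift is not merely a fixed point of $\phi_t$ on $\mathbb{R}^{2n}\times S^1$, but a fixed point at which in addition $g_{\phi_t}=0$. For your bijection to go through you therefore also need that conjugation preserves this extra condition, i.e.\ that $g_{\phi_t}\big(\psi_t(q)\big)=g_{\phi}(q)$ whenever $\phi(q)=q$; this follows immediately from the cocycle identity $g_{\phi\chi}=g_\phi\circ\chi+g_\chi$ (together with $g_{\psi_t^{-1}}=-g_{\psi_t}\circ\psi_t^{-1}$) that you already invoked. With this correction your conclusion that the set of integer critical values of $S_t$ is independent of $t$ stands, and the rest of the proof is as in the paper.
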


Consider the partial order $\leq$ on the group $\text{Cont}_0^{\phantom{0}c}\,(\mathbb{R}^{2n}\times S^1)$ of compactly supported contactomorphisms of $\mathbb{R}^{2n}\times S^1$ isotopic to the identity which is given by $\phi_0\leq\phi_1$ if $\phi_1\phi_0^{-1}$ is the time-1 flow of a non-negative contact Hamiltonian \footnote{It was proved by Bhupal \cite{B} that $\leq$ defines indeed a partial order, see also \cite{mio}.}. It can be proved (see \cite{mio}) that if $\phi_0\leq\phi_1$ then there are generating functions $S_0$, $S_1: E \longrightarrow \mathbb{R}$ for $\Gamma_{\phi_0}$, $\Gamma_{\phi_1}$ respectively such that $S_0\leq S_1$. Thus inclusion of sublevel sets gives an induced homomorphism 
$\mu_0^{\phantom{0}1}: G_{\ast}^{\;\;(a,b]}\,(\phi_1)\rightarrow G_{\ast}^{\;\;(a,b]}\,(\phi_0)$ which commutes with the isomorphisms given by Proposition \ref{conjcont}. Given a domain $\mathcal{V}$ of $\mathbb{R}^{2n}\times S^1$ we denote by $\text{Cont}_{a,b}^{\phantom{ab}c}\,(\mathcal{V})$ the set of contactomorphisms $\phi$ in $\text{Cont}_0^{\phantom{0}c}\,(\mathbb{R}^{2n}\times S^1)$ with support contained in $\mathcal{V}$ and whose generating function does not have $a$ and $b$ as critical values. Then $\lbrace G_k^{\;\;(a,b]}\,(\phi_i)\rbrace_{\phi_i\in\text{Cont}_{a,b}^{\phantom{ab}c}\,(\mathcal{V})}$ is an inversely directed family of groups, so we can define the \textbf{contact homology} $G_{\ast}^{\;\;(a,b]}\,(\mathcal{V})$ of $\mathcal{V}$ with respect to the values $a$ and $b$ to be the inverse limit of this family. Contact invariance and monotonicity of these groups follow easily from Proposition \ref{conjcont} and from monotonicity of $G_{\ast}^{\;\;(a,b]}\,(\phi)$ with respect to the partial order $\leq$ (see \cite{mio}).

\section{Equivariant Contact Homology}\label{equivariant}

We will develop in this section an equivariant version of the theory of contact homology for domains of 
$\mathbb{R}^{2n}\times S^1$, with respect to the action of $\mathbb{Z}_k$ on $\mathbb{R}^{2n}\times S^1\equiv\mathbb{C}^n\times S^1$ given by $\tau_k: (w_0, \cdots, w_{n-1},z) \mapsto \big(e^{2\pi im_0/k}w_0, \cdots, e^{2\pi im_{n-1}/k}w_{n-1},z \big)$. We first observe that an easy application of the uniqueness theorem for generating functions shows that if $\phi$ is equivariant then every generating function $S:(\mathbb{R}^{2n}\times S^1)\times\mathbb{R}^N\rightarrow\mathbb{R}$ of $\phi$ is $\mathbb{Z}_k$-invariant, with respect to the action $\tau_k$ on $\mathbb{R}^{2n}$, up to fiber preserving diffeomorphism and stabilization. This follows from the next two lemmas.

\begin{llemma}\label{ell}
Let $\phi$ be a $\mathbb{Z}_k$-equivariant contactomorphism of $\mathbb{R}^{2n+1}$. Then the associated Legendrian embedding $\gamma_{\phi}:\mathbb{R}^{2n+1}\longrightarrow J^1\mathbb{R}^{2n+1}$ is also $\mathbb{Z}_k$-equivariant.
\end{llemma}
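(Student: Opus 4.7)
The plan is to check the equivariance of $\gamma_{\phi}=\sigma\circ\text{gr}_{\phi}$ directly, exploiting the explicit formula (\ref{formula}) for $\sigma$. First I would pin down the induced $\mathbb{Z}_k$-action on $J^1\mathbb{R}^{2n+1}$. Writing $\tau_k$ on $\mathbb{R}^{2n}$ as multiplication by an orthogonal matrix $R$ (trivial on the $z$-coordinate), the natural lift $\widetilde{\tau}_k$ to $J^1\mathbb{R}^{2n+1}=T^{\ast}\mathbb{R}^{2n+1}\times\mathbb{R}$ acts on the base by $\tau_k$, on the cotangent fibers by the cotangent lift (which, since $R$ is orthogonal, is again $R$ on the $(\xi,\eta)$-coordinates and the identity on the $\zeta$-coordinate), and trivially on the last $\mathbb{R}$-factor.

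Next I would show that the conformal factor $g$ determined by $\phi^{\ast}\alpha_0=e^g\alpha_0$ (with $\alpha_0=dz-\frac{ydx-xdy}{2}$) is $\mathbb{Z}_k$-invariant. Since $\sum_j(y_j\,dx_j-x_j\,dy_j)$ is preserved by rotations in each $(x_j,y_j)$-plane, one has $\tau_k^{\ast}\alpha_0=\alpha_0$. Combined with the equivariance hypothesis $\phi\circ\tau_k=\tau_k\circ\phi$, this yields
\begin{equation*}
e^{g\circ\tau_k}\,\alpha_0 \;=\; (\phi\circ\tau_k)^{\ast}\alpha_0 \;=\; (\tau_k\circ\phi)^{\ast}\alpha_0 \;=\; \phi^{\ast}\alpha_0 \;=\; e^g\,\alpha_0,
\end{equation*}
so $g\circ\tau_k=g$.

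The last step is a term-by-term inspection of (\ref{formula}). Writing $q=(x,y,z)$ and $\phi(q)=(X,Y,Z)$, we get $\phi(\tau_kq)=(RX,RY,Z)$ and $g(\tau_kq)=g(q)$; substituting into (\ref{formula}), the first five components of $\sigma$ factor out a single $R$ (they are linear combinations of $x,y,X,Y$), while the $z$-, $e^g-1$- and $Z$-terms are preserved outright. The one point that requires a calculation is the last component, which contains $xY-yX=\sum_j(x_jY_j-y_jX_j)=\sum_j\mathrm{Im}(\overline{w}_jW_j)$: this is manifestly invariant under simultaneous multiplication of each $w_j$ and each $W_j$ by the same unit complex number, hence is $R$-invariant. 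Assembling these observations yields $\gamma_{\phi}(\tau_kq)=\widetilde{\tau}_k(\gamma_{\phi}(q))$.

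I do not expect a serious obstacle: the lemma is in essence the verification that formula (\ref{formula}) has been tailored so that every one of its components is either $\mathbb{Z}_k$-equivariant or $\mathbb{Z}_k$-invariant. This is precisely the symmetry property highlighted in the footnote following (\ref{formula}), and explains why the alternative formula of Bhupal cannot be used in the equivariant setting: that formula is asymmetric in $(x,y)$ versus $(X,Y)$ and so its mixed terms fail to be $R$-invariant.
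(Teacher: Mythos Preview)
Your proposal is correct and follows essentially the same route as the paper: both argue by direct inspection of formula~(\ref{formula}), first observing that the conformal factor $g$ is $\mathbb{Z}_k$-invariant and then checking component by component that $\gamma_{\phi}\circ\tau_k=\tau_k\circ\gamma_{\phi}$. Your treatment is in fact a bit more explicit---the complex-number argument for the invariance of $xY-yX$ and the pullback computation for $g\circ\tau_k=g$ spell out what the paper summarizes as ``a straightforward calculation''---with only a minor slip in the phrase ``first five components'' (the third component is $z$, which you correctly handle separately).
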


Here we consider the action of $\mathbb{Z}_k$ on $J^1\mathbb{R}^{2n+1}$ given by $(q,p,\theta)\mapsto \big(\tau_k(q), (\tau_k^{\phantom{k}\ast})^{-1}(p),\theta\big)$. In the following we will denote by $\tau_k$ also the action of the generator of $\mathbb{Z}_k$ on $J^1\mathbb{R}^{2n+1}$.

\begin{proof}
By formula (\ref{formula}) we have
$$\gamma_{\phi}(x,y,z)= \Big(\frac{e^{\frac{g}{2}}x+\phi_1}{2},\frac{e^{\frac{g}{2}}y+\phi_2}{2},z, \phi_2-e^{\frac{g}{2}}y, e^{\frac{g}{2}}x-\phi_1, e^{g}-1, \phi_3-z+\frac{e^{\frac{g}{2}}(x\phi_2-y\phi_1)}{2}\Big)$$
where $g$ is the function satisfying $\phi^{\ast}(dz-\frac{ydx-xdy}{2})=e^g(dz-\frac{ydx-xdy}{2})$ and where $\phi_1$, $\phi_2$ and $\phi_3$ denote respectively the first $n$ components, the second $n$ component and the last component of $\phi$. The $\mathbb{Z}_k$-action on $\mathbb{R}^{2n+1}$ is given by
$$
\tau_k(x,y,z)=\Big(\,x\,\text{cos}(\frac{2\pi}{k})-y\,\text{sin}(\frac{2\pi}{k}) , y\,\text{cos}(\frac{2\pi}{k})+x\,\text{sin}(\frac{2\pi}{k}),z\,\Big)
$$
and on $J^1\mathbb{R}^{2n+1}$ by 
$$
\tau_k(q_1,q_2,q_3,p_1,p_2,p_3,\theta)=\Big(\,q_1\,\text{cos}(\frac{2\pi}{k})-q_2\,\text{sin}(\frac{2\pi}{k}) , q_2\,\text{cos}(\frac{2\pi}{k})+q_1\,\text{sin}(\frac{2\pi}{k}), q_3,$$ $$p_1\,\text{cos}(\frac{2\pi}{k})-p_2\,\text{sin}(\frac{2\pi}{k}) , p_2\,\text{cos}(\frac{2\pi}{k})+p_1\,\text{sin}(\frac{2\pi}{k}), p_3, \theta\,\Big).
$$
Since $\phi$ is $\mathbb{Z}_k$-equivariant, i.e. $\phi\circ\tau_k=\tau_k\circ\phi$, we have that 
$\phi_1\circ\tau_k=\phi_1\,\text{cos}(\frac{2\pi}{k})-\phi_2\,\text{sin}(\frac{2\pi}{k})$, 
$\phi_2\circ\tau_k=\phi_2\,\text{cos}(\frac{2\pi}{k})+\phi_1\,\text{sin}(\frac{2\pi}{k})$ and $\phi_3\circ\tau_k=\phi_3$. Moreover we also have that $g\circ\tau_k=g$. Using this information, a straightforward calculation shows that $\gamma_{\phi}\circ\tau_k=\tau_k\circ\gamma_{\phi}$, i.e. $\gamma_{\phi}$ is $\mathbb{Z}_k$-equivariant.
\end{proof}

\begin{llemma}\label{el}
Let $\phi$ be a (not necessarily $\mathbb{Z}_k$-equivariant) contactomorphism of $\mathbb{R}^{2n+1}$. If $S:\mathbb{R}^{2n+1}\times\mathbb{R}^N\rightarrow\mathbb{R}$ is a generating function for the Legendrian submanifold $\Gamma_{\phi}$ of $J^1\mathbb{R}^{2n+1}$ then the function $\overline{S}:\mathbb{R}^{2n+1}\times\mathbb{R}^N\rightarrow\mathbb{R}$ defined by $\overline{S}(q;\xi)=S\big(\tau_k(q);\xi\big)$ is a generating function for $\tau_k^{\phantom{k}-1}(\Gamma_{\phi})$.
\end{llemma}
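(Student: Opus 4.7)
The plan is to verify by direct calculation that the three data defining the generating function---the fiber critical set, the base-derivative, and the values of $S$---all transform correctly under the base substitution $q \mapsto \tau_k(q)$, and then to identify the resulting Legendrian lift as $\tau_k^{-1}(\Gamma_{\phi})$ by comparing with the explicit $\mathbb{Z}_k$-action on $J^1\mathbb{R}^{2n+1}$ recalled before Lemma \ref{ell}.

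First I would introduce the fiber bundle map $T: \mathbb{R}^{2n+1}\times\mathbb{R}^N \to \mathbb{R}^{2n+1}\times\mathbb{R}^N$, $T(q;\xi) := (\tau_k(q);\xi)$. This is a diffeomorphism which covers $\tau_k$ on the base and restricts to the identity on each fiber, and by construction $\overline{S} = S \circ T$. Since $T$ preserves the fiber structure and the fiber derivative at a point $(q;\xi)$ depends only on the restriction of $T$ to the fiber over $q$, the transversality hypothesis on $dS$ transfers verbatim to $d\overline{S}$, and the fiber critical set is $\Sigma_{\overline{S}} = T^{-1}(\Sigma_S)$. Moreover, if $S = S_0 + \mathcal{Q}_{\infty}$ is special, then $\overline{S} = (S_0 \circ T) + \mathcal{Q}_{\infty}$ is again special with the same quadratic form at infinity, so the quadratic-at-infinity property is preserved.

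Next I would compute the Legendrian lift $j_{\overline{S}}: \Sigma_{\overline{S}} \to J^1\mathbb{R}^{2n+1}$ by the chain rule. For $(q;\xi) \in \Sigma_{\overline{S}}$, a vector $X \in T_q\mathbb{R}^{2n+1}$, and any lift $\widehat{X} \in T_{(q;\xi)}E$ of $X$, one has $d\overline{S}_{(q;\xi)}(\widehat{X}) = dS_{T(q;\xi)}(dT\,\widehat{X})$, and since $dT\,\widehat{X}$ projects down to $(\tau_k)_{\ast}X$, this gives $v_{\overline{S}}^{\ast}(q;\xi) = (\tau_k)^{\ast}\bigl(v_S^{\ast}(T(q;\xi))\bigr)$. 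Combining with $\overline{S}(q;\xi) = S(T(q;\xi))$ yields
\[
j_{\overline{S}}(q;\xi) = \Bigl(q,\,(\tau_k)^{\ast}\,v_S^{\ast}(T(q;\xi)),\,S(T(q;\xi))\Bigr) = \tau_k^{-1}\bigl(j_S(T(q;\xi))\bigr),
\]
where the second equality is exactly the definition of the $\mathbb{Z}_k$-action on $J^1\mathbb{R}^{2n+1}$ recalled before Lemma \ref{ell}, applied to $\tau_k^{-1}$ (which acts on cotangent fibers by $\tau_k^{\ast}$).

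Since $T$ maps $\Sigma_{\overline{S}}$ bijectively onto $\Sigma_S$, the image satisfies $j_{\overline{S}}(\Sigma_{\overline{S}}) = \tau_k^{-1}\bigl(j_S(\Sigma_S)\bigr) = \tau_k^{-1}(\Gamma_{\phi})$, which is the claim. I do not expect any real obstacle here: the argument is essentially bookkeeping, and the only subtle point is keeping straight whether one picks up $\tau_k^{\ast}$ or $(\tau_k^{\ast})^{-1}$ on the cotangent fibers---this is forced by the convention $(q,p,\theta) \mapsto \bigl(\tau_k(q),(\tau_k^{\ast})^{-1}(p),\theta\bigr)$ adopted for the lifted action.
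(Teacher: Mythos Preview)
Your proof is correct and takes essentially the same approach as the paper: both compute directly that $\Sigma_{\overline{S}} = T^{-1}(\Sigma_S)$, that the base derivative transforms via $\tau_k^{\ast}$, and then identify the resulting Legendrian image as $\tau_k^{-1}(\Gamma_{\phi})$. The only cosmetic difference is that you phrase the chain-rule step in terms of the bundle map $T$ and $dT$, whereas the paper writes out the partial derivatives explicitly.
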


\begin{proof}
The set of fiber critical points of $\overline{S}$ is given by
$$\Sigma_{\overline{S}}=\{\,(q;\xi)\in\mathbb{R}^{2n+1}\times\mathbb{R}^N\;|\;\frac{\partial\overline{S}}{\partial\xi}(q;\xi)=0\,\}=\{\,(q;\xi)\in\mathbb{R}^{2n+1}\times\mathbb{R}^N\;|\;\frac{\partial S}{\partial\xi}\big(\tau_k(q);\xi\big)=0\,\}$$
and the Legendrian embedding $i_{\overline{S}}: \Sigma_{\overline{S}} \rightarrow J^1\mathbb{R}^{2n+1}$ maps a point $(q;\xi)$ to 
$$i_{\overline{S}}\,(q;\xi)=\big(q,\frac{\partial\overline{S}}{\partial q}(q;\xi),\overline{S}(q;\xi)\big)=
\Big(q,\tau_k^{\phantom{k}\ast}\Big(\frac{\partial S}{\partial q}\big(\tau_k(q);\xi\big)\Big), S\big(\tau_k(q);\xi\big)\Big).$$
Thus $\overline{S}$ generates
$$\{\;\Big(q,\tau_k^{\phantom{k}\ast}\Big(\frac{\partial S}{\partial q}\big(\tau_k(q);\xi\big)\Big), S(\tau_k(q);\xi)\Big)\;|\;
\frac{\partial S}{\partial\xi}\big(\tau_k(q);\xi\big)=0\;\}$$
$$=
\{\;\Big(\tau_k^{-1}(q),\tau_k^{\phantom{k}\ast}\big(\frac{\partial S}{\partial q}(q;\xi)\big), S(q;\xi)\Big)\;|\;
\frac{\partial S}{\partial\xi}(q;\xi)=0\;\}
= \tau_k^{\phantom{k}-1}(\Gamma_{\phi}).
$$
\end{proof}

If the contactomorphism $\phi$ is $\mathbb{Z}_k$-equivariant then by Lemma \ref{ell} we have $\tau_k^{\phantom{k}-1}(\Gamma_{\phi})=\Gamma_{\phi}$, thus it follows from Lemma \ref{el} and the uniqueness part of Theorem \ref{eugfcont} that $S$ is $\mathbb{Z}_k$-invariant up to fiber-preserving diffeomorphism and stabilization.\\
\\
We will show in the next subsection that it is possible to find a generating function $S:(\mathbb{R}^{2n}\times S^1)\times\mathbb{R}^N\rightarrow\mathbb{R}$ which is truly invariant, but with respect to an action of $\mathbb{Z}_k$ that also rotates the fiber $\mathbb{R}^N$. Moreover we will also prove a $\mathbb{Z}_k$-equivariant uniqueness theorem for $\mathbb{Z}_k$-invariant generating functions of $\phi$.

\subsection{$\mathbb{Z}_k$-Invariant generating functions}\label{inv_gf}

In this subsection we will prove existence and uniqueness of $\mathbb{Z}_k$-invariant generating functions for $\mathbb{Z}_k$-equivariant contactomorphisms of $\mathbb{R}^{2n}\times S^1$. We start by introducing the following terminology. A generating function $S:E\rightarrow\mathbb{R}$ for a $\mathbb{Z}_k$-equivariant contactomorphism $\phi$ of $\mathbb{R}^{2n}\times S^1$ will be called a \textbf{$\mathbb{Z}_k$-invariant generating function} if $E=(\mathbb{R}^{2n}\times S^1)\times \mathbb{R}^{M(2n+1)}$ for some $M$, and $S$ is $\mathbb{Z}_k$-invariant with respect to the total diagonal $\mathbb{Z}_k$-action on $(\mathbb{R}^{2n}\times S^1)\times \mathbb{R}^{M(2n+1)}$. We will say that a $\mathbb{Z}_k$-invariant generating function $S:(\mathbb{R}^{2n}\times S^1)\times \mathbb{R}^{M(2n+1)}\rightarrow \mathbb{R}$ is \textbf{$\mathbb{Z}_k$-quadratic at infinity} if there exists a non-degenerate $\mathbb{Z}_k$-invariant quadratic form $Q_{\infty}$ defined on the total space of the vector bundle $(\mathbb{R}^{2n}\times S^1)\times \mathbb{R}^{M(2n+1)}\rightarrow\mathbb{R}^{2n}\times S^1$ such that $dS-\partial_vQ_{\infty}$ is bounded, where $\partial_v$ denotes the fiber derivative. In other words, $Q_{\infty}$ is a function on $(\mathbb{R}^{2n}\times S^1)\times \mathbb{R}^{M(2n+1)}$ whose restriction to each fiber $\{q\}\times\mathbb{R}^{M(2n+1)}$ is a quadratic form (possibly varying from fiber to fiber) which is invariant with respect to the $\mathbb{Z}_k$-action on $\mathbb{R}^{M(2n+1)}$.\\
\\
We will denote by $\text{Cont}_0^{\phantom{0}\mathbb{Z}_k}(\mathbb{R}^{2n}\times S^1)$ the group of $\mathbb{Z}_k$-equivariant compactly supported contactomorphisms of $\mathbb{R}^{2n}\times S^1$ isotopic to the identity through contactomorphisms of this form.

\begin{prop}\label{exist}
Let $\phi$ be a contactomorphism in $\text{Cont}_0^{\phantom{0}\mathbb{Z}_k}(\mathbb{R}^{2n}\times S^1)$. Then $\phi$ has a $\mathbb{Z}_k$-invariant generating function $\mathbb{Z}_k$-quadratic at infinity.
\end{prop}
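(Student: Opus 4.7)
The plan is to adapt Chaperon's broken-trajectory construction of generating functions to the equivariant setting, exploiting the fact that formula (\ref{formula}) was designed to be $\mathbb{Z}_k$-symmetric. First I pick a $\mathbb{Z}_k$-equivariant contact isotopy $\{\phi_t\}_{t\in[0,1]}$ from the identity to $\phi$. By subdividing $[0,1]$ into $M$ intervals with $M$ large enough, I can write $\phi = \phi^{(M)} \circ \cdots \circ \phi^{(1)}$ where each factor $\phi^{(i)} = \phi_{i/M} \circ \phi_{(i-1)/M}^{-1}$ is $C^1$-close to the identity and, since the whole isotopy is $\mathbb{Z}_k$-equivariant, each $\phi^{(i)}$ is itself $\mathbb{Z}_k$-equivariant.

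Next, because each $\phi^{(i)}$ is close to the identity, its associated Legendrian $\Gamma_{\phi^{(i)}} \subset J^1\mathbb{R}^{2n+1}$ is a graph over the $0$-section, and so has a generating function $F_i : \mathbb{R}^{2n+1}\to\mathbb{R}$ with no fiber variables. Lemma \ref{ell} together with the uniqueness part of Theorem \ref{eugfcont} and the $\mathbb{Z}_k$-symmetry of the map $\sigma$ in (\ref{formula}) imply that $F_i$ can be taken to be $\mathbb{Z}_k$-invariant (one simply averages over the group action if necessary, which preserves the generating-function property because $\Gamma_{\phi^{(i)}}$ is $\tau_k$-invariant by Lemma \ref{ell}).

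Then I assemble the composition generating function in the standard way. Using intermediate jet variables $(Q_1,\dots,Q_{M-1}) \in (\mathbb{R}^{2n+1})^{M-1}$ together with their auxiliary coordinates (yielding a fiber $\mathbb{R}^{M(2n+1)}$ after suitable rewriting, as in Th\'eret's presentation), I define
\[
S(q; Q_1,\dots) \;=\; \sum_{i=1}^{M} F_i\bigl(\,\text{expression in } q, Q_1,\dots, Q_{M-1}\,\bigr) \;+\; \text{coupling terms},
\]
where the coupling terms are the bilinear pieces that force the critical-point equations to match Legendrian composition. Because formula (\ref{formula}) is $\mathbb{Z}_k$-symmetric, applying the diagonal $\mathbb{Z}_k$-action $\tau_k$ on base and all intermediate variables simultaneously transforms each summand into itself (using the invariance of each $F_i$) and preserves the bilinear coupling. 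Hence $S$ is invariant under the diagonal $\mathbb{Z}_k$-action on $(\mathbb{R}^{2n}\times S^1)\times\mathbb{R}^{M(2n+1)}$.

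Finally, for the $\mathbb{Z}_k$-quadratic-at-infinity property, I apply Th\'eret's reduction to bring $S$ into a special form $S_0 + Q_\infty$ with $S_0$ compactly supported and $Q_\infty$ a fiberwise quadratic form coming from the coupling. Since the $\mathbb{Z}_k$-action on the fiber is linear and the coupling terms are built from $\mathbb{Z}_k$-equivariant jet data, $Q_\infty$ is automatically $\mathbb{Z}_k$-invariant; any non-invariant part can be killed by averaging over $\mathbb{Z}_k$ without changing the non-degeneracy, since $k$ is finite. The main obstacle is the bookkeeping to verify that the coupling terms produced by the composition procedure really are $\mathbb{Z}_k$-invariant under the diagonal action; this is where the careful choice of formula (\ref{formula}) (rather than the Bhupal formula of \cite{mio}) is essential, and where the footnote in Section \ref{cont_hom} about $\sigma$ being the unique symmetric choice pays off.
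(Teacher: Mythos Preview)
Your overall plan---run Chaperon's broken-trajectory construction equivariantly and then invoke Th\'eret's reduction---is exactly the paper's strategy, and your observation that each small piece $\phi^{(i)}$ has an automatically $\mathbb{Z}_k$-invariant ``zero-fiber'' generating function $F_i$ is correct (indeed, since $\Gamma_{\phi^{(i)}}$ is a graph, $F_i$ is unique, so Lemma~\ref{ell} forces $F_i\circ\tau_k=F_i$; no averaging is needed, and in fact averaging a generating function is not in general a legitimate operation).

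The substantive gap is in your assembly step. You write the composed generating function as $\sum_i F_i(\cdots)+\text{coupling terms}$ as in the symplectic Chaperon formula, but in the contact case there is no such direct additive composition of the $F_i$: the conformal factor $e^g$ in the identification~(\ref{formula}) obstructs turning composition of contactomorphisms into a bilinear coupling of their Legendrian generating functions. The paper handles this by a different mechanism. It does \emph{not} use the $F_i$; instead it uses the \emph{Greek} generating function $\Phi_t$ of the jet-space contactomorphism $\Psi_{\phi_t\phi_{t_{i-1}}^{-1}}$ of $J^1(\mathbb{R}^{2n}\times S^1)$, together with the explicit composition formula~(\ref{a}),
\[
S_t(Q;p,q,\xi)=\Phi_t\bigl(Q,p,S(q;\xi)-pq\bigr),
\]
which produces a generating function for $\Psi_{\phi_t\phi_{t_{i-1}}^{-1}}(\Gamma_{\phi_{t_{i-1}}})=\Gamma_{\phi_t}$. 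One then checks that $\Phi_t$ is $\mathbb{Z}_k$-invariant for the action $\tau_k\times(\tau_k^{\ast})^{-1}$, so~(\ref{a}) is invariant for the diagonal action, and the quadratic form at infinity is the explicit $\mathbb{Z}_k$-invariant form $K(Q;p,q,\xi)=\mathcal{Q}(\xi)-pq$. Thus the ``coupling terms'' you allude to are not bilinear add-ons but arise from the nonlinear formula~(\ref{a}); once you use that formula, both the invariance and the $\mathbb{Z}_k$-quadratic-at-infinity property follow without any averaging.
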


\begin{proof}
We will show that the construction of generating functions given in \cite{Ch} (see also \cite{Th} and \cite{mio}) can also be performed in this equivariant setting. We first need to recall the concept of \textit{Greek generating function}. Let $\varphi$ be a contactomorphism of $J^1\mathbb{R}^m$ which is $\mathcal{C}^1$-close to the identity. The Greek generating function of $\varphi$ is a function  $\Phi:\mathbb{R}^m\times(\mathbb{R}^m)^{\ast}\times\mathbb{R}\rightarrow\mathbb{R}$ defined as follows. For $(p,z)\in(\mathbb{R}^m)^{\ast}\times\mathbb{R}$ consider the function $f_{p,z}:\mathbb{R}^m\rightarrow\mathbb{R}$ given by $f_{p,z}(q)=z+pq$. Note that $j^1f_{p,z}:\mathbb{R}^m\rightarrow J^1\mathbb{R}^m$, for $(p,z)$ varying in 
$(\mathbb{R}^m)^{\ast}\times\mathbb{R}$, form a foliation of $J^1\mathbb{R}^m$. Since $\varphi$ is $\mathcal{C}^1$-close to the identity $\varphi\,(j^1f_{p,z})$ is still a section of $J^1\mathbb{R}^m$, and thus it is the 1-jet of a function $\Phi_{p,z}:\mathbb{R}^m\rightarrow\mathbb{R}$. The Greek generating function $\Phi$ is then defined by $\Phi(Q,p,z)=\Phi_{p,z}(Q)$. Consider now a Legendrian submanifold $L$ of $J^1\mathbb{R}^m$ with generating function $S:\mathbb{R}^m\times\mathbb{R}^N\rightarrow\mathbb{R}$, and a compactly supported contact isotopy $\varphi_t$ of $J^1\mathbb{R}^m$ which is $\mathcal{C}^1$-close to the identity and has Greek generating function
$\Phi_t:\mathbb{R}^m\times(\mathbb{R}^m)^{\ast}\times\mathbb{R}\rightarrow\mathbb{R}$. Then the function
$S_t:\mathbb{R}^m\times\big((\mathbb{R}^m)^{\ast}\times\mathbb{R}^m\times\mathbb{R}^N\big)\rightarrow\mathbb{R}$ defined by 
\begin{equation}\label{a}
S_t\,(Q;p,q,\xi):=\Phi_t\,\big(Q,p,S(q;\xi)-pq \big)
\end{equation}
is a generating function for $\varphi_t(L)$. In particular, notice that this formula shows that the function $S_0:\mathbb{R}^m\times\big((\mathbb{R}^m)^{\ast}\times\mathbb{R}^m\times\mathbb{R}^N\big)\rightarrow\mathbb{R}$ defined by 
$S_0\,(Q;p,q,\xi):=S(q;\xi)+p(Q-q)$ is also a generating function for $L$. The construction of a generating function for a contactomorphism $\phi$ of $\mathbb{R}^{2n}\times S^1$ goes now as follows. Let $\phi$ be the time-1 map of a contact isotopy $\phi_t$ and consider a sequence $0=t_0<t_1<\cdots<t_{I-1}<t_I=1$ with all $\phi_{t_i}\phi_{t_{i-1}}^{\phantom{t_{i}}-1}$ close enough to the identity. Then a generating function for $\phi$ is obtained inductively by applying at every step the composition formula (\ref{a}) to a generating function for $\Gamma_{\phi_{t_{i-1}}}\subset J^1(\mathbb{R}^{2n}\times S^1)$ and a Greek generating function for the contact isotopy $\Psi_{\phi_t\phi_{t_{i-1}}^{\phantom{t_{i}}-1}}$ of $J^1(\mathbb{R}^{2n}\times S^1)$, in order to obtain a generating function $S_t$ for $\Psi_{\phi_t\phi_{t_{i-1}}^{\phantom{t_{i}}-1}}\big(\Gamma_{\phi_{t_{i-1}}}\big)=\Gamma_{\phi_t}$. Suppose now that $\phi$ is the time-1 map of a $\mathbb{Z}_k$-equivariant contact isotopy of $\mathbb{R}^{2n}\times S^1$. Then $\phi_t\phi_{t_{i-1}}^{\phantom{t_{i}}-1}$ is also $\mathbb{Z}_k$-equivariant and so, as a straightforward calculation shows, its Greek generating function is $\mathbb{Z}_k$-invariant with respect to the action $\tau_k\times(\tau_k^{\phantom{k}\ast})^{-1}$ on 
$(\mathbb{R}^{2n}\times S^1)\times(\mathbb{R}^{2n}\times S^1)^{\ast}\times\mathbb{R}$. On the other hand we also know by Lemma
\ref{ell} and the induction hypothesis that the generating function of $\Gamma_{\phi_{t_{i-1}}}$ is $\mathbb{Z}_k$-invariant. By (\ref{a}) we see thus that $S_t$ is invariant with respect to the diagonal action of $\mathbb{Z}_k$ on its domain. Note that the domain is of the form $E=(\mathbb{R}^{2n}\times S^1)\times \mathbb{R}^N$ with $N$ a multiple of $2n+1$. We have thus shown that $\phi$ has a $\mathbb{Z}_k$-invariant generating function. It was proved by Th\'{e}ret \cite{Th} that the family $S_t$ can be made quadratic at infinity by an isotopy of fiber preserving diffeomorphisms. More precisely, we can first apply the change of variables $(Q;p,q,\xi)\mapsto(Q;p,q+Q,\xi)$ and define the function $S_t'(Q;p,q,\xi):=S_t(Q;p,q+Q,\xi)$ and then find a fiber preserving diffeomorphism transforming $S_t'$ into a function that outside a compact set coincides with the quadratic form $K(Q;p,q,\xi)=\mathcal{Q}(\xi)-pq$, where $\mathcal{Q}$ is the quadratic form associated to $S$. Since $K$ is a $\mathbb{Z}_k$-invariant quadratic form, this shows that $\phi$ has a $\mathbb{Z}_k$-invariant generating function $\mathbb{Z}_k$-quadratic at infinity. For later purposes it is important to notice that both the change of variables $(Q;p,q,\xi)\mapsto(Q;p,q+Q,\xi)$ and the fiber preserving diffeomorphism constructed by Th\'{e}ret are $\mathbb{Z}_k$-equivariant.
\end{proof}

\noindent
Generating functions that are $\mathbb{Z}_k$-invariant and $\mathbb{Z}_k$-quadratic at infinity will be called simply \textbf{$\mathbb{Z}_k$-generating functions}. Two $\mathbb{Z}_k$-generating functions are said to be \textbf{$\mathbb{Z}_k$-equivalent} if, up to stabilization with a $\mathbb{Z}_k$-invariant quadratic form, we have $S=S'\circ\Psi$ where $\Psi$ is a fiber-preserving diffeomorphism that is equivariant with respect to the total diagonal $\mathbb{Z}_k$-action on base and fibers.

\begin{llemma}\label{special}
Any $\mathbb{Z}_k$-generating function is $\mathbb{Z}_k$-equivalent to a special one.
\end{llemma}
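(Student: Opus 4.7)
The plan is to run Th\'eret's argument \cite{Th,Th2} for making a generating function quadratic at infinity special, while verifying that every step can be performed $\mathbb{Z}_k$-equivariantly. Let $S:(\mathbb{R}^{2n}\times S^1)\times \mathbb{R}^{M(2n+1)} \to \mathbb{R}$ be a $\mathbb{Z}_k$-generating function, with $\mathbb{Z}_k$-invariant quadratic-at-infinity part $Q_\infty$ whose fiberwise restriction $Q_q$ may a priori depend on the base point $q$. I want to modify $S$ by $\mathbb{Z}_k$-invariant stabilization and $\mathbb{Z}_k$-equivariant fiber-preserving diffeomorphisms into a function of the form $S_0+\mathcal{Q}_\infty$ with $S_0$ compactly supported and $\mathcal{Q}_\infty$ a $\mathbb{Z}_k$-invariant quadratic form independent of $q$.

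The first step is to trivialize the family of fiberwise quadratic forms. I would $\mathbb{Z}_k$-invariantly stabilize by $-Q_\infty$: this doubles the fiber dimension (still equipped with a diagonal $\mathbb{Z}_k$-action) and produces the fiberwise form $Q_q\oplus(-Q_q)$. The point is that $Q_q\oplus(-Q_q)$ admits a canonical identification with the standard hyperbolic form $\mathcal{H}$ via an explicit linear change of variables $L_q$ depending smoothly on $q$; since $L_q$ is built canonically out of $Q_q$, and since the family $Q_q$ itself is $\mathbb{Z}_k$-equivariant in the sense that $Q_{\tau_k q}\circ\tau_k = Q_q$, the family $(L_q)$ is automatically $\mathbb{Z}_k$-equivariant and assembles into a $\mathbb{Z}_k$-equivariant fiber-preserving diffeomorphism. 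After this change of variables the quadratic-at-infinity part becomes a fixed $\mathbb{Z}_k$-invariant form $\mathcal{Q}_\infty$.

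The second step upgrades ``quadratic at infinity'' to ``special'', i.e.\ it makes $S-\mathcal{Q}_\infty$ compactly supported. For this I would apply Th\'eret's construction \cite{Th2}: a fiber-preserving diffeomorphism obtained by flowing along a vector field in the fiber direction designed so that $dS$ and $d\mathcal{Q}_\infty$ are identified outside a compact set. Because $S$, $\mathcal{Q}_\infty$ and the bundle structure are all $\mathbb{Z}_k$-invariant, the data defining this vector field is $\mathbb{Z}_k$-equivariant, and so is the resulting flow; any auxiliary cut-off function or Riemannian metric entering the construction can be replaced by its $\mathbb{Z}_k$-average, since $|\mathbb{Z}_k|$ is finite. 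The main obstacle is verifying that the parametric implicit-function-theorem estimates underlying Th\'eret's argument survive this averaging and that the resulting fiber-preserving map remains a diffeomorphism; both are routine once one checks that averaging moves maps by an arbitrarily small amount on compact sets, so the near-identity property needed to invoke the inverse function theorem is preserved.
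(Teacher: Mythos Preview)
Your proposal is correct and follows essentially the same two-step strategy as the paper: stabilize by $-Q_\infty$ to trivialize the family of fiberwise quadratic forms, then run Th\'eret's Moser-type deformation equivariantly to make $S-\mathcal{Q}_\infty$ compactly supported. The only minor difference is in the trivialization step---you use the canonical identification $Q_q\oplus(-Q_q)\cong\mathcal{H}$ with the hyperbolic form (whose naturality makes $\mathbb{Z}_k$-equivariance automatic), whereas the paper instead chooses orthonormal sections of the now-trivial bundles $E^+$ and $E^-$; your worry about implicit-function-theorem estimates in the second step is unnecessary, since the Moser argument is a direct ODE integration and $\mathbb{Z}_k$-invariance of the input data already forces the resulting flow to be equivariant without any averaging.
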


\begin{proof}
The result can be proved as in the non-equivariant setting \cite{Th,Th2}. Let $S:E=(\mathbb{R}^{2n}\times S^1)\times \mathbb{R}^{M(2n+1)}\rightarrow \mathbb{R}$ be a $\mathbb{Z}_k$-generating function with associated quadratic form $\mathcal{Q}:E\rightarrow\mathbb{R}$. For each $q$ in $\mathbb{R}^{2n}\times S^1$ consider the spaces $E_q^{\phantom{q}+}$ and $E_q^{\phantom{q}-}$ associated to $\mathcal{Q}_q:\mathbb{R}^{M(2n+1)}\rightarrow \mathbb{R}$. After stabilization by the opposite of $\mathcal{Q}$ we may assume that $E^+$ and $E^-$ are trivial vector bundles over $\mathbb{R}^{2n}\times S^1$. Using this we can prove that $\mathcal{Q}$ is isomorphic to a quadratic form independent of the base variable. Consider indeed orthonormal sections $e_1,\cdots,e_i:\mathbb{R}^{2n}\times S^1\rightarrow E^-$, where $i$ is the index of $\mathcal{Q}$, and $e_{i+1},\cdots,e_{M(2n+1)}:\mathbb{R}^{2n}\times S^1\rightarrow E^+$, and define a ($\mathbb{Z}_k$-equivariant) fiber preserving diffeomorphism $A$ on $(\mathbb{R}^{2n}\times S^1)\times \mathbb{R}^{M(2n+1)}$ by $A(q;\alpha_1,\cdots,\alpha_{M(2n+1)})=(q;\alpha_1e_1,\cdots,\alpha_{M(2n+1)}e_{M(2n+1)})$. Then $\mathcal{Q}\circ A$ does not depend on the base point $q$. Suppose now that $S:(\mathbb{R}^{2n}\times S^1)\times \mathbb{R}^{M(2n+1)}\rightarrow \mathbb{R}$ is a $\mathbb{Z}_k$-generating function with associated quadratic form $\mathcal{Q}$ independent of the base variable. Then we can use the Moser method as in \cite{Th} to deform $S$ through $\mathbb{Z}_k$-equivariant fiber preserving diffeomorphisms in order to obtain $S\equiv \mathcal{Q}$ outside a compact set. Moreover we can also ask the fiber preserving diffeomorphism to be trivial on any prescribed compact set.
\end{proof}

We now want to prove that all $\mathbb{Z}_k$-generating functions of $\phi$ are $\mathbb{Z}_k$-equivalent. We will do this by following Th\'{e}ret's proof of the uniqueness theorem for generating functions \cite{Th,Th2} and showing how it can be modified to fit in our equivariant setting. We start with the following result.

\begin{llemma}\label{vert_path}
If a path of $\mathbb{Z}_k$-invariant generating functions generates a constant contactomorphism $\phi$ then all the functions in the path are $\mathbb{Z}_k$-equivalent.
\end{llemma}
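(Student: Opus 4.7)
The strategy is to adapt Théret's Moser-type argument for paths of generating functions \cite{Th, Th2} to the $\mathbb{Z}_k$-equivariant setting, exploiting the finiteness of $\mathbb{Z}_k$ so that group-averaging converts any non-equivariant construction into an equivariant one of the same quality.

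First, applying Lemma \ref{special} parametrically to the path $\{S_t\}_{t\in[0,1]}$, I would reduce to the case where each $S_t$ has the special form $S_{t,0}+\mathcal{Q}_\infty$, with $\mathcal{Q}_\infty$ a fixed $\mathbb{Z}_k$-invariant non-degenerate quadratic form (the same for all $t$) and $S_{t,0}$ compactly supported in a set independent of $t$; the fiber-preserving diffeomorphisms used in the proof of Lemma \ref{special} depend smoothly on parameters and are already $\mathbb{Z}_k$-equivariant, so this preliminary step causes no new issues. Having done this, I would seek a path $\Psi_t$ of $\mathbb{Z}_k$-equivariant fiber-preserving diffeomorphisms of $E=(\mathbb{R}^{2n}\times S^1)\times\mathbb{R}^{M(2n+1)}$ with $\Psi_0=\mathrm{id}$ and $S_t\circ\Psi_t=S_0$. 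Differentiating in $t$ converts the goal into finding a vertical vector field $X_t$ generating $\Psi_t$ and satisfying
\[
dS_t(X_t)=-\dot{S}_t.
\]
Since $\phi$ is constant in $t$, the Legendrian image $j_{S_t}(\Sigma_{S_t})=\Gamma_\phi$ is $t$-independent, which forces $\dot{S}_t$ to vanish on the fiber-critical set $\Sigma_{S_t}$ with enough order that $-\dot{S}_t/dS_t$ extends smoothly across $\Sigma_{S_t}$ to a vertical vector field; combined with the compact support of $S_{t,0}$, the standard construction of \cite{Th} then yields a smooth compactly supported vertical $X_t$ solving the equation.

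To make the construction equivariant I would then average over $\mathbb{Z}_k$, replacing $X_t$ by
\[
\widetilde{X}_t=\frac{1}{k}\sum_{j=0}^{k-1}(\tau_k^{\,j})_\ast X_t.
\]
This field is $\mathbb{Z}_k$-invariant by construction and remains vertical since $\tau_k$ preserves the fibration $E\to\mathbb{R}^{2n}\times S^1$. Using the $\mathbb{Z}_k$-invariance of $S_t$ and of $\dot{S}_t$, the chain rule gives
\[
dS_t\bigl((\tau_k^{\,j})_\ast X_t\bigr)=\bigl(dS_t(X_t)\bigr)\circ\tau_k^{-j}=-\dot{S}_t\circ\tau_k^{-j}=-\dot{S}_t,
\]
so $dS_t(\widetilde{X}_t)=-\dot{S}_t$ as required, and the flow of $\widetilde{X}_t$ yields the desired path $\Psi_t$ of $\mathbb{Z}_k$-equivariant fiber-preserving diffeomorphisms, giving $S_t=S_0\circ\Psi_t^{-1}$ and hence the $\mathbb{Z}_k$-equivalence of all functions in the path. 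The main obstacle is the construction of $X_t$ near $\Sigma_{S_t}$, where both sides of $dS_t(X_t)=-\dot{S}_t$ degenerate; this is the technical heart of Théret's argument and carries over without modification, since the subsequent averaging over the finite group $\mathbb{Z}_k$ preserves smoothness, support, and the defining equation.
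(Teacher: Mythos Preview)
Your proposal is correct and follows essentially the same approach as the paper: both invoke Th\'{e}ret's Moser-type argument and observe that it yields $\mathbb{Z}_k$-equivariant diffeomorphisms when the input data are $\mathbb{Z}_k$-invariant. The only minor difference is that the paper simply remarks that applying the Moser method to invariant functions automatically produces an equivariant vector field, whereas you obtain equivariance by explicitly averaging a possibly non-equivariant solution over $\mathbb{Z}_k$; these are equivalent ways of packaging the same observation.
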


\begin{proof}
The non-equivariant version of this statement is proved by Th\'{e}ret using the Moser method to obtain a fiber preserving diffeomorphism connecting the begin and end point of the path of generating functions. It is immediate to see that if we apply this method to a path of $\mathbb{Z}_k$-invariant generating functions then the resulting fiber preserving diffeomorphism is $\mathbb{Z}_k$-equivariant.
\end{proof}

As a first application of this result, consider the function $S_0:(\mathbb{R}^{2n}\times S^1)\times(\mathbb{R}^{2n}\times S^1)^{\ast}\times(\mathbb{R}^{2n}\times S^1)\times\mathbb{R}^N\rightarrow\mathbb{R}$ that appears in the proof of Proposition \ref{exist}. Recall that $S_0$ is defined by $S_0(Q;p,q,\xi):=S(q;\xi)+p(Q-q)$, and generates the same contactomorphism as $S:(\mathbb{R}^{2n}\times S^1)\times\mathbb{R}^N\rightarrow\mathbb{R}$. To see that $S_0$ is $\mathbb{Z}_k$-equivalent to $S$ we first apply the change of variables $(Q;p,q,\xi)\mapsto(Q;p,q+Q,\xi)$, that shows that $S_0$ is $\mathbb{Z}_k$-equivalent to the function $S_0'$ defined by $S_0'(Q;p,q,\xi)=S(Q+q;\xi)-pq$. Notice that $S_0'$ can be written more conveniently as $S_0'(q;p,Q,\xi)=S(Q+q;\xi)-pQ$, and can be joined to the function $S_0''(q;p,Q,\xi)=S(q;\xi)-pQ$ by the path of functions $s\mapsto S(sQ+q;\xi)-pQ$, $s\in[0,1)$. Since all functions of this path generate the same contactomorphism, by applying Lemma \ref{vert_path} we see thus that $S_0'$ and $S_0''$ are $\mathbb{Z}_k$-equivalent. Since $S_0'$ is $\mathbb{Z}_k$-equivalent to $S_0$ and $S_0''$ is a $\mathbb{Z}_k$-stabilization of $S$ we see thus that 
$S_0$ is $\mathbb{Z}_k$-equivalent to $S$. This fact, applied to the proof of Proposition \ref{exist} (and to a 1-parameter version of it) shows the following path and homotopy lifting result.

\begin{prop}\label{lift_prop} 
Let $\phi_t$ be an isotopy in $\text{Cont}_0^{\phantom{0}\mathbb{Z}_k}(\mathbb{R}^{2n}\times S^1)$ and suppose that $\phi_0$ has a $\mathbb{Z}_k$-generating function $S_0$. Then there exists a path $S_t'$ of $\mathbb{Z}_k$-generating functions such that $S_0'$ is $\mathbb{Z}_k$-equivalent to $S_0$, and each $S_t'$ generates the corresponding $\phi_t$. Moreover, suppose that $\phi_t^{\phantom{t}s}$ is a homotopy of paths in $\text{Cont}_0^{\phantom{0}\mathbb{Z}_k}(\mathbb{R}^{2n}\times S^1)$ and $S_t^{\phantom{t}0}$ a path of $\mathbb{Z}_k$-generating functions such that each $S_t^{\phantom{t}0}$ generates the corresponding $\phi_t^{\phantom{t}0}$. Then there exists a family $S_t'^{\phantom{t}s}$ such that $S_t'^{\phantom{t}0}$ is $\mathbb{Z}_k$-equivalent to 
$S_t^{\phantom{t}0}$ and each $S_t'^{\phantom{t}s}$ generates the corresponding $\phi_t^{\phantom{t}s}$.
\end{prop}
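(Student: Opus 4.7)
The plan is to run the construction from the proof of Proposition \ref{exist} parametrically, using the given $S_0$ as initial data. For the path statement, fix a subdivision $0=t_0<t_1<\cdots<t_I=1$ fine enough that each contactomorphism $\psi_t^{(i)}:=\phi_t\phi_{t_{i-1}}^{\phantom{t_{i}}-1}$, for $t\in[t_{i-1},t_i]$, is $\mathcal{C}^1$-close to the identity and so that $\Psi_{\psi_t^{(i)}}$ admits a Greek generating function $\Phi_t^{(i)}$. Starting from $S_0$ and applying the composition formula (\ref{a}) inductively on the intervals $[t_{i-1},t_i]$ produces a continuous family $t\mapsto S_t$ generating $\Gamma_{\phi_t}$. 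Since each $\psi_t^{(i)}$ is $\mathbb{Z}_k$-equivariant, the same calculation as in Proposition \ref{exist} shows that its Greek generating function is $\mathbb{Z}_k$-invariant with respect to the diagonal action $\tau_k\times(\tau_k^{\phantom{k}\ast})^{-1}$; combined with Lemma \ref{ell} applied at each intermediate stage, this propagates $\mathbb{Z}_k$-invariance through the induction.

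Next I apply the Th\'{e}ret modification from the end of the proof of Proposition \ref{exist} uniformly in $t$: first perform the change of variables $(Q;p,q,\xi)\mapsto(Q;p,q+Q,\xi)$, then bring each $S_t$ into $\mathbb{Z}_k$-quadratic-at-infinity form via a $\mathbb{Z}_k$-equivariant, continuously $t$-dependent fiber-preserving diffeomorphism. The resulting path of $\mathbb{Z}_k$-generating functions is the desired $S_t'$. To verify that $S_0'$ is $\mathbb{Z}_k$-equivalent to $S_0$, observe that $\psi_0^{(1)}=\text{id}$ has Greek generating function $\Phi(Q,p,z)=z+pQ$, so formula (\ref{a}) at $t=0$ produces exactly the trivial lift $S_0(q;\xi)+p(Q-q)$. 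The discussion immediately preceding the statement of the proposition shows precisely that this trivial lift is $\mathbb{Z}_k$-equivalent to $S_0$, and the Th\'{e}ret step preserves the $\mathbb{Z}_k$-equivalence class.

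For the homotopy statement I would run the same argument with the extra parameter $s$: choose a joint refinement of subdivisions in $(t,s)$ so that each local composition $\phi_t^{\phantom{t}s}(\phi_{t'}^{\phantom{t'}s'})^{-1}$ is $\mathcal{C}^1$-close to the identity on the relevant cell, then apply (\ref{a}) iteratively, now starting from the given path $S_t^{\phantom{t}0}$ at $s=0$. The $\mathbb{Z}_k$-equivariance and continuity arguments are unchanged. At $s=0$ the identity-Greek-generating-function argument above again gives that $S_t'^{\phantom{t}0}$ is $\mathbb{Z}_k$-equivalent to $S_t^{\phantom{t}0}$ for every $t$.

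The main delicate point is ensuring that the Th\'{e}ret modification can be carried out continuously in the parameter(s) while remaining $\mathbb{Z}_k$-equivariant. This is not an obstruction but requires care: the Moser argument producing the fiber-preserving diffeomorphism is intrinsically equivariant once performed with $\mathbb{Z}_k$-invariant auxiliary data, which can always be arranged by $\mathbb{Z}_k$-averaging. Proposition \ref{exist} already confirms the equivariance of the required modifications in the unparametrized case, so the only new ingredient is the parametric smoothness, which is routine once the subdivision of the parameter domain is fine enough.
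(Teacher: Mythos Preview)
Your proposal is correct and follows essentially the same approach as the paper. The paper's own argument is very terse---just the sentence ``This fact, applied to the proof of Proposition~\ref{exist} (and to a 1-parameter version of it) shows the following path and homotopy lifting result''---and you have accurately unpacked what this means: run Chaperon's composition construction parametrically starting from the given $S_0$, invoke the $\mathbb{Z}_k$-equivalence of the trivial lift $S(q;\xi)+p(Q-q)$ with $S$ established just before the proposition (via Lemma~\ref{vert_path}) to handle the initial condition, and apply Th\'{e}ret's modification equivariantly and continuously in the parameter.
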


We can now prove $\mathbb{Z}_k$-uniqueness of generating functions.

\begin{prop}\label{equiv_uniq}
All $\mathbb{Z}_k$-generating functions of a contactomorphism $\phi$ in $\text{Cont}_0^{\phantom{0}\mathbb{Z}_k}(\mathbb{R}^{2n}\times S^1)$ are $\mathbb{Z}_k$-equivalent.
\end{prop}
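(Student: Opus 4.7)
The strategy is to follow Th\'{e}ret's proof of non-equivariant uniqueness \cite{Th, Th2} while checking that each step admits a $\mathbb{Z}_k$-equivariant adaptation. The three tools available are Lemma \ref{special} (reduction to special form), Lemma \ref{vert_path} (any vertical path of $\mathbb{Z}_k$-generating functions of a fixed contactomorphism is constant up to $\mathbb{Z}_k$-equivalence), and Proposition \ref{lift_prop} (path and homotopy lifting of $\mathbb{Z}_k$-generating functions along $\mathbb{Z}_k$-equivariant isotopies).

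Let $S_0$ and $S_1$ be two $\mathbb{Z}_k$-generating functions of $\phi$. By Lemma \ref{special} I may assume both are special, and by $\mathbb{Z}_k$-invariant stabilization of each with the opposite of the other's quadratic at infinity I may further assume they are defined on the same trivial bundle over $\mathbb{R}^{2n}\times S^1$ with the same quadratic at infinity $\mathcal{Q}$. Fixing a $\mathbb{Z}_k$-equivariant isotopy $\phi_t$ from $\mathrm{id}$ to $\phi$, I apply the path lifting part of Proposition \ref{lift_prop} to the reversed isotopy $\phi_{1-t}$ starting from $S_i$ (for $i=0,1$), obtaining paths $U_t^i$ of $\mathbb{Z}_k$-generating functions with $U_0^i \sim_{\mathbb{Z}_k} S_i$ and $T_i := U_1^i$ a $\mathbb{Z}_k$-generating function of the identity.

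The crux of the argument is the \emph{base case}: any two $\mathbb{Z}_k$-generating functions of $\mathrm{id}$---equivalently, of the $0$-section of $J^1(S^{2n}\times S^1)$---are $\mathbb{Z}_k$-equivalent. After putting $T_0$ and $T_1$ in special form with the same quadratic at infinity, the difference $T_1 - T_0$ is compactly supported and vanishes on the common fiber-critical set. The classical Moser argument then integrates a vertical vector field $Y_t$ solving $\partial_\xi T_t \cdot Y_t = T_1 - T_0$ (with $T_t := (1-t) T_0 + t T_1$); averaging $Y_t$ over the $\mathbb{Z}_k$-action, which preserves $T_t$, the fiber-critical set and the defining equation, yields a $\mathbb{Z}_k$-equivariant fiber-preserving isotopy identifying $T_0$ with $T_1$.

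With the base case in hand, the proposition follows by chaining equivalences. Lifting $\phi_t$ forward from $T_i$ via Proposition \ref{lift_prop} produces $\mathbb{Z}_k$-generating functions $R_i$ of $\phi$; the concatenation of $U_t^i$ with this forward lift gives a lift of the loop $\phi_{1-t}\ast\phi_t$ at $\phi$, which is null-homotopic in $\mathrm{Cont}_0^{\mathbb{Z}_k}(\mathbb{R}^{2n}\times S^1)$ through $\mathbb{Z}_k$-equivariant isotopies by the standard ``there and back'' contraction. Applying the homotopy lifting part of Proposition \ref{lift_prop} to this contraction and using Lemma \ref{vert_path} on the two vertical slices of the contracted homotopy (both $t=0$ and $t=1$ project to the constant path at $\phi$) shows $S_i \sim_{\mathbb{Z}_k} R_i$. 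Finally, the equivalence $T_0 \sim_{\mathbb{Z}_k} T_1$ is transported to $R_0 \sim_{\mathbb{Z}_k} R_1$ by the naturality of the construction of Proposition \ref{exist}: the composition formula (\ref{a}) is covariant under $\mathbb{Z}_k$-equivariant fiber-preserving diffeomorphisms and $\mathbb{Z}_k$-invariant stabilizations, so the lifted paths from $T_0$ and $T_1$ are related by the same equivalence throughout. Chaining gives $S_0 \sim_{\mathbb{Z}_k} R_0 \sim_{\mathbb{Z}_k} R_1 \sim_{\mathbb{Z}_k} S_1$. The main obstacle is the base case: while the non-equivariant Moser argument is routine, one must carefully verify that the averaging preserves both the division-lemma structure underlying the solvability of $\partial_\xi T_t \cdot Y_t = T_1 - T_0$ and the requirement that $Y_t$ be vertical, which reduces to a check exploiting the linearity of the $\mathbb{Z}_k$-action on the fiber coordinates of $(\mathbb{R}^{2n}\times S^1)\times\mathbb{R}^{M(2n+1)}$.
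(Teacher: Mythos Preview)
Your reduction to the identity case via path and homotopy lifting is essentially the same as the paper's, and that part is fine. The genuine gap is in your treatment of the base case.

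You propose to connect two $\mathbb{Z}_k$-generating functions $T_0,T_1$ of the identity by the linear interpolation $T_t=(1-t)T_0+tT_1$ and then run Moser on the equation $\partial_\xi T_t\cdot Y_t = T_1-T_0$. But the linear interpolation $T_t$ is \emph{not} in general a generating function of the $0$-section for intermediate $t$, so Lemma \ref{vert_path} does not apply. Concretely, solvability of the Moser equation requires $T_1-T_0$ to vanish on the fiber-critical set $\Sigma_{T_t}=\{\partial_\xi T_t=0\}$ for every $t$; this would follow if each $T_t$ generated the $0$-section (since then the value of $T_t$ on $\Sigma_{T_t}$ is the constant $0$), but you have no such control. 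Relatedly, there is no ``common fiber-critical set'': even after arranging the same quadratic at infinity, $\Sigma_{T_0}$ and $\Sigma_{T_1}$ are distinct graphs over the base, so the sentence ``$T_1-T_0$ vanishes on the common fiber-critical set'' has no content. Averaging $Y_t$ over $\mathbb{Z}_k$ is not the obstacle; the obstacle is that $Y_t$ does not exist in the first place.

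The paper handles the identity case quite differently, and this is the substantive work in the proof: one first translates the fiber-critical set to the zero section by the equivariant shear $(q,\xi)\mapsto(q,\xi+v_S(q))$, then applies the (equivariant) generalized Morse lemma along the fibers to identify $S$ with a $\mathbb{Z}_k$-invariant quadratic form $\mathcal{Q}$ on a neighborhood of the zero section, and finally promotes this local identification to a global one by a gradient-flow argument combined with an extension of a level-set embedding $j:\mathcal{Q}^{-1}(-\epsilon)\cap\mathcal{U}\hookrightarrow S^{-1}(-\epsilon)$. The last step uses that the relevant space of extensions is a locally trivial fibration with contractible fiber over $\mathbb{R}^{2n}\times S^1$, and the equivariant section is produced by choosing it over the fixed point $0$ and extending using freeness of the $\mathbb{Z}_k$-action elsewhere. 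None of this is captured by a direct Moser interpolation between $T_0$ and $T_1$.
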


\begin{proof}
We first prove that the set of elements of $\text{Cont}_0^{\phantom{0}\mathbb{Z}_k}(\mathbb{R}^{2n}\times S^1)$ for which the result holds is stable under isotopy. Let $\phi_t$ be an isotopy in $\text{Cont}_0^{\phantom{0}\mathbb{Z}_k}(\mathbb{R}^{2n}\times S^1)$ and assume that the result holds for $\phi_1$. We want to prove that the same is true for $\phi_0$. Consider two $\mathbb{Z}_k$-generating function $S$ and $T$ of $\phi_0$. By Proposition \ref{lift_prop} we know that the path $\phi_t$ in $\text{Cont}_0^{\phantom{0}\mathbb{Z}_k}(\mathbb{R}^{2n}\times S^1)$ can be lifted to paths of $\mathbb{Z}_k$-generating function $S_t'$ and $T_t'$, each $S_t'$ and $T_t'$ generating the corresponding $\phi_t$, such that $S_0'$ is $\mathbb{Z}_k$-equivalent to $S$ and $T_0'$ to $T$. By assumption, the generating functions $S_1'$ and $T_1'$ of $\phi_1$ are $\mathbb{Z}_k$-equivalent. If we apply the operations relating $S_1'$ and $T_1'$ to the whole path $S_t'$ we obtain a path $S_t''$ lifting $\phi_t$ and such that $S_1''=T_1'$. Thus we have a path of generating functions lifting the path in $\text{Cont}_0^{\phantom{0}\mathbb{Z}_k}(\mathbb{R}^{2n}\times S^1)$ given by following $\phi_t$ from $\phi_0$ to $\phi_1$ and then backwards from $\phi_1$ to $\phi_0$. Since this path in $\text{Cont}_0^{\phantom{0}\mathbb{Z}_k}(\mathbb{R}^{2n}\times S^1)$ is contractible we can lift a homotopy connecting it to the constant path $\phi_0$, obtaining as the time-1 map of the homotopy a path of $\mathbb{Z}_k$-generating functions of $\phi_0$ connecting $S_0''$ to $T_0'$. We can then conclude using Lemma \ref{vert_path}. We have thus reduced the problem into proving that all $\mathbb{Z}_k$-generating functions of the identity are $\mathbb{Z}_k$-equivalent. Let $S:(\mathbb{R}^{2n}\times S^1)\times \mathbb{R}^{M(2n+1)}\rightarrow \mathbb{R}$ be a $\mathbb{Z}_k$-generating function for the identity, and recall that we can assume that it is special, i.e. it coincides outside a compact set with a ($\mathbb{Z}_k$-invariant) quadratic form $\mathcal{Q}_{\infty}$ independent of the base variable. Since $S$ generates the identity, $\Sigma_S$ projects well to $\mathbb{R}^{2n}\times S^1$ and hence it is the graph of a map $v_S:\mathbb{R}^{2n}\times S^1\rightarrow\mathbb{R}^{M(2n+1)}$. Note that $v_S$ is $\mathbb{Z}_k$-equivariant with respect to the $\mathbb{Z}_k$-actions on $\mathbb{R}^{2n}\times S^1$ and $\mathbb{R}^{M(2n+1)}$. Define a fiber preserving diffeomorphism $A$ of $(\mathbb{R}^{2n}\times S^1)\times\mathbb{R}^{M(2n+1)}$ by $A(x,v)=\big(x,v+v_S(x)\big)$ and consider the function $S\circ A$. Note that $S\circ A$ is still $\mathbb{Z}_k$-invariant and that its set of fiber critical points is $\mathbb{R}^{2n}\times S^1\times\{0\}$. Although $S\circ A$ is not special anymore, by Lemma \ref{special} we can make it special without affecting its set of fiber critical points. We have thus shown that every $\mathbb{Z}_k$-generating function of the identity is $\mathbb{Z}_k$-equivalent to some $S:(\mathbb{R}^{2n}\times S^1)\times \mathbb{R}^{M(2n+1)}\rightarrow \mathbb{R}$  which is special and has $\mathbb{R}^{2n}\times S^1\times\{0\}$ as its set of fiber critical points. Note that in particular this means that for any $q\in\mathbb{R}^{2n}\times S^1$ the restriction $S_q$ of $S$ to the fiber above $q$ has only one critical point, at the origin. We can thus apply the generalized Morse lemma to transform $S$ by a $\mathbb{Z}_k$-equivariant fiber preserving diffeomorphism into a function, still denoted by $S$, that coincides with some non-degenerate quadratic form $\mathcal{Q}$ in a neighborhood $\mathcal{U}$ of $\mathbb{R}^{2n}\times S^1\times\{0\}$. Note that the generalized Morse lemma fits well in our $\mathbb{Z}_k$-equivariant setting because its proof is based on the Moser method (see \cite{BH}) and so if we work with $\mathbb{Z}_k$-invariant functions we automatically obtain $\mathbb{Z}_k$-equivariant diffeomorphisms. Note also that for every $q$ in $\mathbb{R}^{2n}\times S^1$ the quadratic form $\mathcal{Q}_q$ is invariant with respect to the $\mathbb{Z}_k$-action on $\mathbb{R}^{M(2n+1)}$. This follows from the fact that the function $S$ coincides with $\mathcal{Q}$ near $0$ and with the $\mathbb{Z}_k$-invariant quadratic form $\mathcal{Q}_{\infty}$ at $\infty$, and that the vertical gradient of $S$ induces a diffeomorphism between $S^{-1}(-\infty)$ and $S^{-1}(-\epsilon)$ and between $S^{-1}(\infty)$ and $S^{-1}(\epsilon)$ for $\epsilon>0$ small. We will now prove that $S$ is globally $\mathbb{Z}_k$-equivalent to the quadratic form $\mathcal{Q}$. Since $S\equiv \mathcal{Q}$ on $\mathcal{U}$, for $\epsilon>0$ small we have an injection $j:\mathcal{Q}^{-1}(-\epsilon)\cap\mathcal{U}\rightarrow S^{-1}(-\epsilon)$. Note that if $j$ extends to a ($\mathbb{Z}_k$-equivariant) diffeomorphism $\mathcal{Q}^{-1}(-\epsilon)\rightarrow S^{-1}(-\epsilon)$ then $S$ and $\mathcal{Q}$ are ($\mathbb{Z}_k$-)equivalent: a ($\mathbb{Z}_k$-equivariant) diffeomorphism is given by following the negative gradient flow of $\mathcal{Q}$ until we reach either $\mathcal{U}$ or $\mathcal{Q}^{-1}(-\epsilon)$ and then back the flow of $S$, after applying $j$ in the second case. The problem is thus reduced into showing that $j:\mathcal{Q}^{-1}(-\epsilon)\cap\mathcal{U}\rightarrow S^{-1}(-\epsilon)$ extends to a $\mathbb{Z}_k$-equivariant fiber preserving diffeomorphism $\mathcal{Q}^{-1}(-\epsilon)\rightarrow S^{-1}(-\epsilon)$. Since $S\equiv \mathcal{Q}_{\infty}$ outside a compact set we can identify $S^{-1}(-\infty)$ with $(\mathbb{R}^{2n}\times S^1)\times \mathbb{R}^{M(2n+1)-i}\times S^{i-1}$ where $i$ is the index of $\mathcal{Q}_{\infty}$. Since the vertical gradient of $S$ induces a fiber preserving diffeomorphism between $S^{-1}(-\infty)$ and $S^{-1}(-\epsilon)$, we identify the embedding $j$ with a ($\mathbb{Z}_k$-equivariant) embedding of $\mathcal{Q}^{-1}(-\epsilon)\cap\mathcal{U}$ into $(\mathbb{R}^{2n}\times S^1)\times \mathbb{R}^{M(2n+1)-i}\times S^{i-1}$. For $q\in\mathbb{R}^{2n}\times S^1$ consider the space $\mathcal{P}_q$ of diffeomorphisms $f_q:\mathcal{Q}_q^{\phantom{q}-1}(-\epsilon)\rightarrow\{q\}\times\mathbb{R}^{k-i}\times S^{i-1}$ extending $j$. It was proved by Th\'{e}ret that $\phi:\mathcal{P}\rightarrow\mathbb{R}^{2n}\times S^1$ is a locally trivial fibration, with contractible fiber. Note also that we have an obvious $\mathbb{Z}_k$-action on $\mathcal{P}$ making $\phi:\mathcal{P}\rightarrow\mathbb{R}^{2n}\times S^1$ equivariant. This action restricts to a $\mathbb{Z}_k$-action on the fiber $\mathcal{P}_0$ over the origin. Take a $\mathbb{Z}_k$-equivariant $f_0$ in $\mathcal{P}_0$, and extend it to $\pi^{-1}(\mathcal{V})$ where $\mathcal{V}$ is a neighborhood of $0$ above which $\pi$ is trivial. Since the $\mathbb{Z}_k$-action is free on $\pi^{-1}(\mathbb{R}^{2n}\times S^1\setminus\{0\})$ we can extend in a $\mathbb{Z}_k$-equivariant way this local section to a global one, and obtain thus a $\mathbb{Z}_k$-equivariant fiber preserving diffeomorphism $\mathcal{Q}^{-1}(-\epsilon)\rightarrow S^{-1}(-\epsilon)$ extending $j$. We have thus shown that every $\mathbb{Z}_k$-generating function of the identity is $\mathbb{Z}_k$-equivalent to a $\mathbb{Z}_k$-invariant non-degenerate quadratic form. This finishes the proof because any two $\mathbb{Z}_k$-invariant non-degenerate quadratic forms are $\mathbb{Z}_k$-equivalent. Indeed, after stabilization we can assume that they are defined on a vector bundle of the same dimension and that they have the same index. It is then easy to find a $\mathbb{Z}_k$-equivariant (linear) fiber preserving diffeomorphism relating them.
\end{proof}

\subsection{Equivariant homology} \label{eq}

The results of the previous subsection will be applied to define the $\mathbb{Z}_k$-equivariant contact homology groups of $\mathbb{Z}_k$-equivariant contactomorphisms of $\mathbb{R}^{2n}\times S^1$. The definition will be based on the classical construction of equivariant homology of a space endowed with a group action, that we will now recall. A standard reference is for instance \cite[III.1]{tD}. Let $X$ be a space endowed with the action of a compact Lie group $G$. Note first that taking the homology of the quotient $X/G$ is not a homotopy invariant operation: this homology changes in general if we replace $X$ by a homotopy equivalent space. The right homotopy invariant substitute for the quotient is constructed as follows. Consider a principal $G$-bundle $p: EG \rightarrow BG$ with contractible total space $EG$. Such a bundle is called a universal principal $G$-bundle, because any other principal $G$-bundle $E \rightarrow B$ can be obtained as a pullback of it. An explicit construction of a universal principal $G$-bundle was given by Milnor \cite{M_art}. Consider the diagonal action of $G$ on the product $EG\times X$. Note that this action is free. The quotient $X_G:=EG \times_G X$ is called the \textit{Borel construction} on $X$, and is well-defined up to homotopy equivalence. The equivariant homology $H_{G,\ast}(X)$ is defined to be the ordinary homology of $X_G$. Note that if the action of $G$ on $X$ is free then the map $\sigma: X_G \rightarrow X/G$ induced by the projection $EG\times X \rightarrow X$ is a homotopy equivalence, thus $H_{G,\ast}(X)=H_{\ast}(X/G)$ in this case.\\
\\
Every $G$-equivariant map $f:X\rightarrow Y$ induces a map $f_G:X_G\rightarrow Y_G$ and so a homomorphism $(f_{G})_{\ast}: H_{G,\ast}(X) \rightarrow H_{G,\ast}(Y)$. Moreover, if $f$ and $g$ are two $G$-homotopic $G$-maps then we have $(f_{G})_{\ast}=(g_{G})_{\ast}$. In other words, $H_{G,\ast}(-)$ is a $G$-homotopy-invariant functor. If $A$ is a $G$-subspace of $X$ then $A_G$ is a subspace of $X_G$. We define the relative equivariant homology $H_{G,\ast}(X,A)$ to be the usual relative homology of the pair $(X_G,A_G)$. Then we have an exact sequence
$$
\cdots\rightarrow H_{G,n}(A) \rightarrow H_{G,n}(X) \rightarrow H_{G,n}(X,A)\rightarrow H_{G,n-1}(A) \rightarrow \cdots
$$
Suppose now that we have a unitary representation $V$ of $G$ of complex dimension $n$, with unit ball and sphere $DV$ and $SV$. Note that the complex structure on $V$ induces a canonical Thom class for the vector bundle $(V\times X)_G \rightarrow X_G$. We have thus an \textit{equivariant Thom isomorphism}
\begin{equation}\label{eq_thom}
H_{G,i}(X) \xrightarrow{\cong} H_{G,i+2n}(DV\times X,SV\times X)
\end{equation}
(see \cite[III (1.10)]{tD}).

\subsection{Equivariant contact homology}\label{equivhom}

Consider a contactomorphism $\phi$ in $\text{Cont}_0^{\phantom{0}\mathbb{Z}_k}(\mathbb{R}^{2n}\times S^1)$ with $\mathbb{Z}_k$-generating function $S: E=(\mathbb{R}^{2n}\times S^1)\times \mathbb{R}^{M(2n+1)}\rightarrow\mathbb{R}$. Since $S$ is $\mathbb{Z}_k$-invariant the action of $\mathbb{Z}_k$ on $E$ restricts to the sublevel sets of $S$, so we can define
$$G_{\mathbb{Z}_k,\,\ast}^{\;\;\;(a,b]}\,(\phi):=H_{\mathbb{Z}_k,\,\ast+\iota}\,(E^b, E^a).$$

\begin{llemma}
$G_{\mathbb{Z}_k,\,\ast}^{\;\;\;(a,b]}\,(\phi)$ is well defined, i.e. does not depend on the choice of the $\mathbb{Z}_k$-generating function $S$.
\end{llemma}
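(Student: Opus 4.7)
The plan is to mirror the non-equivariant well-definedness argument recalled in Section \ref{cont_hom}, using Proposition \ref{equiv_uniq} to reduce to the two elementary moves that generate $\mathbb{Z}_k$-equivalence, and then checking that each move induces a canonical isomorphism on $H_{\mathbb{Z}_k,*+\iota}(E^b,E^a)$.

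First I would handle a $\mathbb{Z}_k$-equivariant fiber-preserving diffeomorphism $\Psi:E\to E'$ with $S=S'\circ\Psi$. Such a $\Psi$ restricts to a $\mathbb{Z}_k$-equivariant homeomorphism of pairs $(E^b,E^a)\to (E'^b,E'^a)$; applying the Borel construction functorially produces a homeomorphism $((E^b)_{\mathbb{Z}_k},(E^a)_{\mathbb{Z}_k})\to((E'^b)_{\mathbb{Z}_k},(E'^a)_{\mathbb{Z}_k})$, and hence an isomorphism on equivariant homology. Since $\Psi$ is fiber-preserving and linear at infinity (up to a compactly supported correction), the index $\iota$ of the $\mathbb{Z}_k$-quadratic-at-infinity part is the same for $S$ and $S'$, so the shifted groups agree.

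Next I would handle stabilization $S'=S+Q$ on $E'=E\oplus F$, where $Q$ is a non-degenerate $\mathbb{Z}_k$-invariant quadratic form on the vector bundle $F\to\mathbb{R}^{2n}\times S^1$. Averaging an arbitrary fiberwise inner product gives a $\mathbb{Z}_k$-invariant one, and diagonalizing $Q$ with respect to it yields a $\mathbb{Z}_k$-equivariant orthogonal decomposition $F=F^+\oplus F^-$ on which $Q$ is positive, respectively negative, definite; let $j=\mathrm{rank}\,F^-$, so that $\iota(S')=\iota(S)+j$. A standard deformation retraction argument, performed equivariantly, identifies $(E'^b,E'^a)$ up to $\mathbb{Z}_k$-homotopy equivalence with the ``Thom pair'' $(DF^-\times_B E^b,\,SF^-\times_B E^b\cup DF^-\times_B E^a)$, by flowing along the negative $Q$-gradient on $F^+$ and pushing the $F^-$-direction to the unit disk bundle. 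The equivariant Thom isomorphism (\ref{eq_thom}) then provides an isomorphism $H_{\mathbb{Z}_k,*}(E^b,E^a)\cong H_{\mathbb{Z}_k,*+j}(E'^b,E'^a)$, and after the degree shift by $\iota$ this is exactly the identification $G_{\mathbb{Z}_k,*}^{(a,b]}(\phi)\cong G_{\mathbb{Z}_k,*}^{(a,b]}(\phi)$ coming from the two generating functions. Finally a routine check, using the homotopy and path lifting result of Proposition \ref{lift_prop} together with a cocycle argument, shows that the isomorphism produced is independent of the chain of moves chosen to realize the $\mathbb{Z}_k$-equivalence.

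The main obstacle is the Thom step. The equivariant Thom isomorphism recalled in \ref{eq} is stated for unitary representations, whereas $F^-$ is only a real $\mathbb{Z}_k$-equivariant bundle. I would circumvent this by exploiting that $F$ is a subbundle of a trivial bundle $(\mathbb{R}^{2n}\times S^1)\times \mathbb{R}^{M(2n+1)}$ on which $\mathbb{Z}_k$ acts through the standard action on $\mathbb{R}^{2n+1}=\mathbb{C}^n\oplus\mathbb{R}$; after a $\mathbb{Z}_k$-invariant positive-definite stabilization (which is harmless, by the first move) one can arrange that $F^-$ is isomorphic, as a $\mathbb{Z}_k$-equivariant bundle, to a trivial bundle whose fiber is a unitary $\mathbb{Z}_k$-representation, bringing the situation within the scope of (\ref{eq_thom}). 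For $k$ an odd prime (which we may assume) an alternative is to note that every real $\mathbb{Z}_k$-representation is canonically $\mathbb{Z}_k$-oriented, so the Thom class exists with integer coefficients directly.
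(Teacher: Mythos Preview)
Your proof is correct and follows essentially the same route as the paper: reduce via Proposition~\ref{equiv_uniq} to the two basic moves, dispose of equivariant fiber-preserving diffeomorphisms by functoriality of the Borel construction, and handle stabilization by splitting $Q$ into positive and negative definite pieces and invoking a deformation retraction together with the equivariant Thom isomorphism~(\ref{eq_thom}). You are in fact more careful than the paper on one point: the paper applies~(\ref{eq_thom}) directly to the negative definite summand without commenting on the unitary hypothesis, whereas you flag this and resolve it (your observation that for $k$ an odd prime every real $\mathbb{Z}_k$-representation is canonically oriented is the cleanest fix).
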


\begin{proof}
By Theorem \ref{equiv_uniq} we just have to check that $\mathbb{Z}_k$-equivariant fiber preserving diffeomorphism and 
$\mathbb{Z}_k$-invariant stabilization do not affect the definition of $G_{\mathbb{Z}_k,\,\ast}^{\;\;\;(a,b]}\,(\phi)$. Independence by $\mathbb{Z}_k$-equivariant fiber preserving diffeomorphism follows directly from the discussion in the previous subsection. Suppose now that $S': (\mathbb{R}^{2n}\times S^1)\times \mathbb{R}^{M(2n+1)}\times \mathbb{R}^{N(2n+1)}\rightarrow\mathbb{R}$ is defined by $S'(q;\xi_1,\xi_2)=S(q;\xi_1)+\mathcal{Q}(\xi_2)$ with $\mathcal{Q}:\mathbb{R}^{N(2n+1)}\rightarrow\mathbb{R}$ a $\mathbb{Z}_k$-invariant quadratic form of index $\iota_{\mathcal{Q}}$. Let $(E^b)'$ and $(E^a)'$ denote the sublevel sets of $S'$ at $a$ and $b$ respectively. We have to show that
$H_{\mathbb{Z}_k,\,\ast}\,(E^b, E^a)=H_{\mathbb{Z}_k,\,\ast+\iota_{\mathcal{Q}}}\,\big((E^b)', (E^a)'\big)$. Note that if $\mathcal{Q}$ is positive definite then the result follows from the fact that the pair $\big((E^b)', (E^a)'\big)$ deformation retracts to $(E^b, E^a)$. On the other hand, if $\mathcal{Q}$ is negative definite then by excision the pair $\big((E^b)', (E^a)'\big)$ is equivalent to $\big(D(E^b), S(E^b)\cup D(E^a)\big)$ and thus the result follows from the equivariant Thom isomorphism (\ref{eq_thom}). The general case follows, after a change of coordinates, by considering separately the positive and negative definite parts of $\mathcal{Q}$.
\end{proof}

We will now check that all the functorial properties of contact homology go through in this equivariant setting.

\begin{prop}\label{equivconjcont}
For any contactomorphism $\psi$ in $\text{Cont}_0^{\phantom{0}\mathbb{Z}_k}(\mathbb{R}^{2n}\times S^1)$ we have an induced isomorphism 
$$\psi_{\mathbb{Z}_k,\,\ast}: G_{\mathbb{Z}_k,\,\ast}^{\;\;\;(a,b]}\,(\psi\phi\psi^{-1})\longrightarrow G_{\mathbb{Z}_k,\,\ast}^{\;\;\;(a,b]}\,(\phi).$$
\end{prop}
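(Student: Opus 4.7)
The plan is to imitate the proof of the non-equivariant Proposition~\ref{conjcont} given in \cite{mio}, using the equivariant path-lifting statement of Proposition~\ref{lift_prop} to track the $\mathbb{Z}_k$-symmetry at every stage.

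First I would pick a $\mathbb{Z}_k$-equivariant contact isotopy $\psi_t$ from the identity to $\psi$, which exists by the very definition of $\text{Cont}_0^{\phantom{0}\mathbb{Z}_k}(\mathbb{R}^{2n}\times S^1)$. The conjugated isotopy $\phi_t := \psi_t\phi\psi_t^{-1}$ then lies in $\text{Cont}_0^{\phantom{0}\mathbb{Z}_k}(\mathbb{R}^{2n}\times S^1)$ and runs from $\phi$ to $\psi\phi\psi^{-1}$. Starting with a $\mathbb{Z}_k$-generating function $S_0$ for $\phi$, Proposition~\ref{lift_prop} produces a path $S_t$ of $\mathbb{Z}_k$-generating functions on a common domain $E$, with $S_0$ $\mathbb{Z}_k$-equivalent to the chosen one and $S_1$ a $\mathbb{Z}_k$-generating function for $\psi\phi\psi^{-1}$.

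The next step is to observe that the set of critical values of $S_t$ does not depend on $t$. Critical points of $S_t$ correspond to translated points of $\phi_t$ (see \cite{mio}), and the diffeomorphism $\psi_t$ gives a bijection between translated points of $\phi$ and those of $\phi_t$, preserving the associated height which is exactly the critical value of the generating function. In particular, since the integers $a$ and $b$ are not critical values of $S_0$ they remain non-critical for every $S_t$, and the pair of sublevel sets $(E^b_t,E^a_t)$ deforms through a continuous one-parameter family of $\mathbb{Z}_k$-invariant pairs (the $\mathbb{Z}_k$-invariance being inherited from $S_t$ itself).

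A continuous family of $\mathbb{Z}_k$-invariant inclusions of pairs induces, after passing to the Borel construction $(-)_{\mathbb{Z}_k}=E\mathbb{Z}_k\times_{\mathbb{Z}_k}(-)$, a homotopy equivalence
$$\big((E^b_0)_{\mathbb{Z}_k},(E^a_0)_{\mathbb{Z}_k}\big)\;\simeq\;\big((E^b_1)_{\mathbb{Z}_k},(E^a_1)_{\mathbb{Z}_k}\big),$$
and hence an isomorphism on equivariant relative homology, which after the index shift $\iota$ is precisely the desired map $\psi_{\mathbb{Z}_k,\,\ast}$. Independence from the chosen isotopy $\psi_t$ and functoriality $(\psi\psi')_{\mathbb{Z}_k,\,\ast}=\psi'_{\mathbb{Z}_k,\,\ast}\,\psi_{\mathbb{Z}_k,\,\ast}$ follow by concatenation of isotopies and another application of the $\mathbb{Z}_k$-equivariant homotopy-lifting part of Proposition~\ref{lift_prop}, exactly as in the non-equivariant setting. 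The only point needing genuine attention is the equivariance of path-lifting, but this is already packaged into Proposition~\ref{lift_prop}, so no new difficulty arises beyond the non-equivariant case.
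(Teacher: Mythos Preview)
Your argument is correct and follows essentially the same route as the paper: choose a $\mathbb{Z}_k$-equivariant isotopy $\psi_t$, lift the conjugated path $\psi_t\phi\psi_t^{-1}$ to a one-parameter family of $\mathbb{Z}_k$-generating functions via Proposition~\ref{lift_prop}, and use that $a,b$ remain regular values throughout to conclude that the equivariant homology of the sublevel pairs is constant in $t$. The only presentational difference is that the paper makes the last step explicit by constructing a $\mathbb{Z}_k$-equivariant isotopy $\theta_t$ between the level sets $S_t^{-1}(\{a,b\})$ via the gradient flow of $S_t$ with respect to a $\mathbb{Z}_k$-invariant metric, whereas you phrase it more abstractly as a homotopy equivalence of Borel pairs; both amount to the same thing.
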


\begin{proof}
Let $\psi_t$ be an isotopy connecting $\psi=\psi_t|_{t=1}$ to the identity, and consider a 1-parameter family of $\mathbb{Z}_k$-generating functions $S_t:(\mathbb{R}^{2n}\times S^1)\times\mathbb{R}^{M(2n+1)}\rightarrow\mathbb{R}$
for $\psi_t\phi\psi_t^{-1}$ (see Proposition \ref{lift_prop}). Consider the isotopy 
$\theta_t:S_0^{\phantom{0}-1}(\{a,b\})\rightarrow S_t^{\phantom{t}-1}(\{a,b\})$ defined by following the gradient flow of $S_t$ (see \cite{mio} for more details). Since the $S_t$ are $\mathbb{Z}_k$-invariant it follows that $\theta_t$ is $\mathbb{Z}_k$-equivariant (provided we calculate the gradient flow with respect to a $\mathbb{Z}_k$-invariant metric). We can then extend $\theta_t$ to a $\mathbb{Z}_k$-equivariant isotopy of $(\mathbb{R}^{2n}\times S^1)\times\mathbb{R}^{M(2n+1)}$ and get 
an induced isomorphism $\psi_{\mathbb{Z}_k,\,\ast}: G_{\mathbb{Z}_k,\,\ast}^{\;\;\;(a,b]}\,(\psi\phi\psi^{-1})\longrightarrow G_{\mathbb{Z}_k,\,\ast}^{\;\;\;(a,b]}\,(\phi)$.
\end{proof}

Note also that if $\phi_0$ and $\phi_1$ are $\mathbb{Z}_k$-equivariant contactomorphisms of $\mathbb{R}^{2n}\times S^1$ with $\phi_0\leq\phi_1$ then inclusion of sublevel sets of the generating functions induces, as in the non-equivariant case, a homomorphism 
$(\mu_0^{\phantom{0}1})_{\,\mathbb{Z}_k}: G_{\mathbb{Z}_k,\,\ast}^{\;\;\;(a,b]}\,(\phi_1) \rightarrow G_{\mathbb{Z}_k,\,\ast}^{\;\;\;(a,b]}\,(\phi_0)$. Given a $\mathbb{Z}_k$-invariant domain $\mathcal{V}$ in $\mathbb{R}^{2n}\times S^1$ we define its \textbf{$\mathbb{Z}_k$-equivariant contact homology} $G_{\mathbb{Z}_k,\,\ast}^{\;\;\;(a,b]}\,(\mathcal{V})$  to be the inverse limit with respect to the partial order $\leq$ of the directed family of groups $G_{\mathbb{Z}_k,\,\ast}^{\;\;\;(a,b]}\,(\phi)$, for $\mathbb{Z}_k$-equivariant contactomorphisms $\phi$ of $\mathbb{R}^{2n}\times S^1$ supported in $\mathcal{V}$. Equivariant contact invariance (Theorem \ref{cont_inv_eq}) and monotonicity (Theorem \ref{monot_eq}) of these groups follow easily respectively from Proposition \ref{equivconjcont} and the monotonicity property of the groups $G_{\mathbb{Z}_k,\,\ast}^{\;\;\;(a,b]}\,(\phi)$ with respect to the partial order $\leq$ (see the proofs of the analogous non-equivariant results in \cite{mio} for more details).

\begin{tthm}\label{cont_inv_eq}
For any $\mathbb{Z}_k$-invariant domain $\mathcal{V}$ in $\mathbb{R}^{2n}\times S^1$ and any $\mathbb{Z}_k$-equivariant contactomorphism $\psi$ in $\text{Cont}_0^{\phantom{0}\mathbb{Z}_k}(\mathbb{R}^{2n}\times S^1)$ we have an induced isomorphism 
$\psi_{\mathbb{Z}_k,\,\ast}:G_{\mathbb{Z}_k,\,\ast}^{\;\;(a,b]}\,\big(\psi(\mathcal{V})\big)\longrightarrow G_{\mathbb{Z}_k,\,\ast}^{\;\;(a,b]}\,(\mathcal{V})$.
\end{tthm}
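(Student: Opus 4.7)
The plan is to mirror the non-equivariant proof from \cite{mio}, using Proposition \ref{equivconjcont} as the pointwise ingredient and checking equivariant compatibility with the monotonicity maps defining the inverse limits.

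First, I would observe that conjugation by $\psi$ sets up a bijection of directed systems. Indeed, if $\phi$ is a $\mathbb{Z}_k$-equivariant contactomorphism supported in $\mathcal{V}$, then $\psi\phi\psi^{-1}$ is $\mathbb{Z}_k$-equivariant (since $\psi$ and $\phi$ both are) and supported in $\psi(\mathcal{V})$. Moreover, conjugation by $\psi$ preserves the partial order $\leq$ on $\text{Cont}_0^{\phantom{0}\mathbb{Z}_k}(\mathbb{R}^{2n}\times S^1)$: if $\phi_1\phi_0^{-1}$ is the time-$1$ map of a non-negative contact Hamiltonian $H_t$, then $(\psi\phi_1\psi^{-1})(\psi\phi_0\psi^{-1})^{-1}=\psi(\phi_1\phi_0^{-1})\psi^{-1}$ is the time-$1$ map of the (still non-negative) contact Hamiltonian obtained by pulling $H_t$ back through $\psi$, and this Hamiltonian remains $\mathbb{Z}_k$-invariant because $\psi$ and $H_t$ are equivariant. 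Hence conjugation by $\psi$ sends the directed family defining $G_{\mathbb{Z}_k,\,\ast}^{\;\;\;(a,b]}\,(\mathcal{V})$ to the one defining $G_{\mathbb{Z}_k,\,\ast}^{\;\;\;(a,b]}\,\big(\psi(\mathcal{V})\big)$ in an order-preserving way.

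Next, for each $\phi$ in the family I apply Proposition \ref{equivconjcont} to obtain an isomorphism $\psi_{\mathbb{Z}_k,\,\ast}: G_{\mathbb{Z}_k,\,\ast}^{\;\;\;(a,b]}\,(\psi\phi\psi^{-1})\longrightarrow G_{\mathbb{Z}_k,\,\ast}^{\;\;\;(a,b]}\,(\phi)$. The key compatibility to check is that for $\phi_0\leq\phi_1$ supported in $\mathcal{V}$ the square
\begin{equation*}
\xymatrix{
G_{\mathbb{Z}_k,\,\ast}^{\;\;\;(a,b]}\,(\psi\phi_1\psi^{-1})\ar[r]^-{\psi_{\mathbb{Z}_k,\,\ast}}\ar[d]_-{(\mu_0^{\phantom{0}1})_{\,\mathbb{Z}_k}} & G_{\mathbb{Z}_k,\,\ast}^{\;\;\;(a,b]}\,(\phi_1)\ar[d]^-{(\mu_0^{\phantom{0}1})_{\,\mathbb{Z}_k}} \\
G_{\mathbb{Z}_k,\,\ast}^{\;\;\;(a,b]}\,(\psi\phi_0\psi^{-1})\ar[r]_-{\psi_{\mathbb{Z}_k,\,\ast}} & G_{\mathbb{Z}_k,\,\ast}^{\;\;\;(a,b]}\,(\phi_0)
}
\end{equation*}
commutes. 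For this, I would use Proposition \ref{lift_prop} to simultaneously lift a path $\psi_t$ joining $\text{id}$ to $\psi$ through $\mathbb{Z}_k$-equivariant contactomorphisms to compatible paths of $\mathbb{Z}_k$-generating functions $S_t^{(0)}$, $S_t^{(1)}$ for $\psi_t\phi_0\psi_t^{-1}$ and $\psi_t\phi_1\psi_t^{-1}$ respectively, chosen from the start so that $S_t^{(0)}\leq S_t^{(1)}$ (this is possible because the inequality of Hamiltonians is preserved under the whole conjugation path, and the standard construction of ordered pairs of generating functions from \cite{mio} transports through the equivariant lifting). The $\mathbb{Z}_k$-equivariant isotopy $\theta_t$ constructed in the proof of Proposition \ref{equivconjcont} from the $\mathbb{Z}_k$-invariant gradient flow then simultaneously realises both vertical isomorphisms, and it sends sublevel sets of $S_1^{(0)}$ into sublevel sets of $S_1^{(1)}$ in a way compatible with the inclusions realising the horizontal monotonicity, so the square commutes.

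Finally, passing to the inverse limit over all $\phi$ supported in $\mathcal{V}$ (equivalently, all $\psi\phi\psi^{-1}$ supported in $\psi(\mathcal{V})$), the compatible family of isomorphisms $\psi_{\mathbb{Z}_k,\,\ast}$ induces the desired isomorphism $\psi_{\mathbb{Z}_k,\,\ast}:G_{\mathbb{Z}_k,\,\ast}^{\;\;(a,b]}\,\big(\psi(\mathcal{V})\big)\longrightarrow G_{\mathbb{Z}_k,\,\ast}^{\;\;(a,b]}\,(\mathcal{V})$ by the universal property. The main obstacle is precisely the compatibility square above: it amounts to a careful equivariant repetition of the corresponding non-equivariant verification in \cite{mio}, and the only new input needed is that the lifting of paths and homotopies and the gradient-flow construction can all be performed $\mathbb{Z}_k$-equivariantly, which has already been arranged by Proposition \ref{lift_prop} and by the use of a $\mathbb{Z}_k$-invariant metric in the proof of Proposition \ref{equivconjcont}.
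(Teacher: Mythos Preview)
Your proposal is correct and follows essentially the same route as the paper: the paper does not spell out a proof but simply states that the result follows from Proposition~\ref{equivconjcont} together with the monotonicity of the groups $G_{\mathbb{Z}_k,\,\ast}^{\;\;\;(a,b]}\,(\phi)$, referring to the non-equivariant argument in \cite{mio} for details. Your write-up is precisely that argument with the equivariant bookkeeping made explicit, so there is nothing to add.
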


\begin{tthm}\label{monot_eq}
Every inclusion of $\mathbb{Z}_k$-invariant domains induces a homomorphism of the $\mathbb{Z}_k$-equivariant homology groups (reversing the order), with the following functorial properties: 
\begin{enumerate}
\renewcommand{\labelenumi}{(\roman{enumi})}
\item
If $\mathcal{V}_1\subset\mathcal{V}_2\subset\mathcal{V}_3$ then the following diagram commutes
\begin{displaymath}
\xymatrix{
 G_{\mathbb{Z}_k,\,\ast}^{\;\;(a,b]}\,(\mathcal{V}_3) \ar[r] \ar[rd] &
 G_{\mathbb{Z}_k,\,\ast}^{\;\;(a,b]}\,(\mathcal{V}_2) \ar[d] \\
 & G_{\mathbb{Z}_k,\,\ast}^{\;\;(a,b]}\,(\mathcal{V}_1).}
\end{displaymath} 
\item
If $\mathcal{V}_1\subset\mathcal{V}_2$, then for any $\mathbb{Z}_k$-equivariant contactomorphism $\psi$ the following diagram commutes
\begin{displaymath}
\xymatrix{
 G_{\mathbb{Z}_k,\,\ast}^{\;\;(a,b]}\,(\mathcal{V}_2) \ar[r] &
 G_{\mathbb{Z}_k,\,\ast}^{\;\;(a,b]}\,(\mathcal{V}_1)  \\
 G_{\mathbb{Z}_k,\,\ast}^{\;\;(a,b]}\,\big(\psi(\mathcal{V}_2)\big) \ar[u]^{\psi_{\mathbb{Z}_k,\,\ast}} \ar[r] &  
 G_{\mathbb{Z}_k,\,\ast}^{\;\;(a,b]}\,\big(\psi(\mathcal{V}_1)\big) \ar[u]_{\psi_{\mathbb{Z}_k,\,\ast}}.}
\end{displaymath} 
\end{enumerate}
\end{tthm}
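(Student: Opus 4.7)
The plan is to mimic the proof of the non-equivariant monotonicity result from \cite{mio}, checking that every construction is compatible with the $\mathbb{Z}_k$-equivariant structure. All of the analytic content has already been done: Proposition \ref{equivconjcont} provides the equivariant conjugation isomorphism on the level of individual contactomorphisms, and the monotonicity homomorphism $(\mu_0^{\phantom{0}1})_{\,\mathbb{Z}_k}:G_{\mathbb{Z}_k,\,\ast}^{\;\;\;(a,b]}(\phi_1)\to G_{\mathbb{Z}_k,\,\ast}^{\;\;\;(a,b]}(\phi_0)$ for $\phi_0\leq\phi_1$ has been built from inclusion of sublevel sets of $\mathbb{Z}_k$-generating functions. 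What remains is essentially a formal universal-property argument about inverse limits.

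First I would produce the map induced by an inclusion $\mathcal{V}_1\subset\mathcal{V}_2$. Any $\phi$ in $\text{Cont}_0^{\phantom{0}\mathbb{Z}_k}(\mathbb{R}^{2n}\times S^1)$ supported in $\mathcal{V}_1$ is also supported in $\mathcal{V}_2$, so the index set defining the inverse system for $G_{\mathbb{Z}_k,\,\ast}^{\;\;\;(a,b]}(\mathcal{V}_1)$ embeds in the one for $G_{\mathbb{Z}_k,\,\ast}^{\;\;\;(a,b]}(\mathcal{V}_2)$. The canonical projections from the larger inverse limit restrict to a compatible cone over the smaller index set, and the universal property then yields the desired homomorphism $G_{\mathbb{Z}_k,\,\ast}^{\;\;\;(a,b]}(\mathcal{V}_2)\to G_{\mathbb{Z}_k,\,\ast}^{\;\;\;(a,b]}(\mathcal{V}_1)$. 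Part (i) is now immediate: for $\mathcal{V}_1\subset\mathcal{V}_2\subset\mathcal{V}_3$ both composite paths from $\mathcal{V}_3$ to $\mathcal{V}_1$ agree with the single family of projections onto $G_{\mathbb{Z}_k,\,\ast}^{\;\;\;(a,b]}(\phi)$ for $\phi$ supported in $\mathcal{V}_1$, so they coincide by uniqueness of the factorization.

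For part (ii) the key observation is that conjugation by the $\mathbb{Z}_k$-equivariant contactomorphism $\psi$ induces a bijection $\phi\mapsto\psi\phi\psi^{-1}$ between $\mathbb{Z}_k$-equivariant contactomorphisms supported in $\mathcal{V}_i$ and those supported in $\psi(\mathcal{V}_i)$. This bijection respects the partial order $\leq$: if $\phi_0\leq\phi_1$ then $\phi_1\phi_0^{-1}$ is the time-1 map of an isotopy generated by a non-negative contact Hamiltonian, and conjugation by $\psi$ sends this isotopy to one generated by a non-negative Hamiltonian (the contact conformal factor of $\psi$ is everywhere positive). Proposition \ref{equivconjcont} then supplies isomorphisms at each stage, which I would assemble into an isomorphism of inverse limits $\psi_{\mathbb{Z}_k,\,\ast}:G_{\mathbb{Z}_k,\,\ast}^{\;\;\;(a,b]}\big(\psi(\mathcal{V}_i)\big)\to G_{\mathbb{Z}_k,\,\ast}^{\;\;\;(a,b]}(\mathcal{V}_i)$ once compatibility with the maps $(\mu_0^{\phantom{0}1})_{\,\mathbb{Z}_k}$ is verified. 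The commutativity in (ii) then reduces to the trivial observation that $\phi\mapsto\psi\phi\psi^{-1}$ commutes with inclusion of supports.

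The only step demanding genuine checking is the naturality of the individual conjugation isomorphisms $\psi_{\mathbb{Z}_k,\,\ast}$ with respect to the monotonicity homomorphisms, i.e.\ that the equivariant gradient-flow isotopy built in the proof of Proposition \ref{equivconjcont} commutes, up to equivariant homotopy of sublevel set pairs, with the inclusions induced by $\phi_0\leq\phi_1$. This is exactly the diagram chase of the non-equivariant proof in \cite{mio}, and the only new ingredient is that every auxiliary metric, gradient flow and isotopy must be chosen $\mathbb{Z}_k$-invariant or $\mathbb{Z}_k$-equivariant. This is always possible by averaging over the finite group $\mathbb{Z}_k$ (of the $\mathbb{Z}_k$-generating functions involved). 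I expect this to be the main technical step; it is entirely routine, and everything else is pure category theory.
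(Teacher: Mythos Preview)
Your proposal is correct and follows precisely the route the paper indicates: the paper does not give a detailed proof of Theorem~\ref{monot_eq} but simply remarks that it follows from Proposition~\ref{equivconjcont} and the monotonicity of the groups $G_{\mathbb{Z}_k,\,\ast}^{\;\;\;(a,b]}(\phi)$ with respect to $\leq$, referring to the non-equivariant argument in \cite{mio} for the formal inverse-limit manipulations. Your write-up is a faithful (and somewhat more explicit) unpacking of exactly that argument.
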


Note that the results in \ref{inv_gf} can be specialized to the symplectic case in order prove existence and uniqueness of $\mathbb{Z}_k$-generating functions for compactly supported symplectomorphisms of $\mathbb{R}^{2n}$ that are generated by a $\mathbb{Z}_k$-invariant Hamiltonian. Thus we can also define, as in the contact case, equivariant symplectic homology groups for $\mathbb{Z}_k$-invariant domains of $\mathbb{R}^{2n}$. The relation between the equivariant symplectic homology of a domain $\mathcal{U}$ of $\mathbb{R}^{2n}$ and the equivariant contact homology of its prequantization $\mathcal{U}\times S^1$ is given by the following theorem, that can be proved as in the non-equivariant case (see \cite{mio}).

\begin{tthm}\label{lift3_eq}
For any $\mathbb{Z}_k$-invariant domain $\mathcal{U}$ of $\mathbb{R}^{2n}$ we have $$G_{\mathbb{Z}_k,\,\ast}^{\;\;(a,b]}\,(\mathcal{U}\times S^1)=G_{\mathbb{Z}_k,\,\ast}^{\;\;(a,b]}\,(\mathcal{U})\otimes H_{\ast}(S^1).$$ Moreover, this correspondence is functorial in the following sense. Let $\mathcal{U}_1$, $\mathcal{U}_2$ be $\mathbb{Z}_k$-invariant domains in $\mathbb{R}^{2n}$ with $\mathcal{U}_1\subset\mathcal{U}_2$, and for $i=1,2$ identify $G_{\mathbb{Z}_k,\,\ast}^{\;\;(a,b]}\,(\mathcal{U}_i\times S^1)$ with $G_{\mathbb{Z}_k,\,\ast}^{\;\;(a,b]}\,(\mathcal{U}_i)\otimes H_{\ast}(S^1)$. Then the homomorphism $G_{\mathbb{Z}_k,\,\ast}^{\;\;(a,b]}\,(\mathcal{U}_2\times S^1)\rightarrow G_{\mathbb{Z}_k,\,\ast}^{\;\;(a,b]}\,(\mathcal{U}_1\times S^1)$ induced by the inclusion $\mathcal{U}_1\times S^1\hookrightarrow\mathcal{U}_2\times S^1$ is given by $\mu_{\mathbb{Z}_k}\otimes \text{id}$, where $\mu_{\mathbb{Z}_k}: G_{\mathbb{Z}_k,\,\ast}^{\;\;(a,b]}\,(\mathcal{U}_2)\rightarrow G_{\mathbb{Z}_k,\,\ast}^{\;\;(a,b]}\,(\mathcal{U}_1)$ is the homomorphism induced by $\mathcal{U}_1\hookrightarrow\mathcal{U}_2$.
\end{tthm}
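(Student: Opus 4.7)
The plan is to follow the proof of the non-equivariant statement in \cite{mio}, adapting each step to the $\mathbb{Z}_k$-equivariant setting. The three essential modifications are: working with a cofinal family of $\mathbb{Z}_k$-equivariant lifts, invoking the ordinary K\"unneth formula for the Borel construction (rather than for the spaces themselves), and using the $\mathbb{Z}_k$-equivariant existence and uniqueness results of Propositions \ref{exist} and \ref{equiv_uniq} in place of the classical ones.

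First I would establish that the subset of $\mathbb{Z}_k$-equivariant contactomorphisms of $\mathbb{R}^{2n}\times S^1$ supported in $\mathcal{U}\times S^1$ which arise as \emph{lifts} of compactly supported $\mathbb{Z}_k$-equivariant Hamiltonian symplectomorphisms of $\mathcal{U}$ is cofinal with respect to the partial order $\leq$. For such a lift $\phi(x,y,z)=\big(\psi(x,y),\,z+F(x,y)\big)$, the construction in Proposition \ref{exist} produces a $\mathbb{Z}_k$-invariant generating function that is additionally invariant under the $S^1$-translation on the base, hence of the form $S(x,y,z;\xi)=S_{\psi}(x,y;\xi)$ where $S_{\psi}$ is a $\mathbb{Z}_k$-invariant generating function for $\psi$ in the symplectic sense (the generating function for $\psi$ being produced by the analogous symplectic construction mentioned at the end of \ref{equivhom}). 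In particular the sublevel sets split as
\begin{equation*}
E^c \;=\; E_{\psi}^{\,c}\times S^1,
\end{equation*}
where the $\mathbb{Z}_k$-action is diagonal on the base and on the fiber, and \emph{trivial} on the $S^1$-factor.

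Since $\mathbb{Z}_k$ acts trivially on $S^1$, the Borel construction factors as $(E^c)_{\mathbb{Z}_k}=(E_{\psi}^{\,c})_{\mathbb{Z}_k}\times S^1$, and similarly for the pair. The ordinary K\"unneth formula then applies with the torsion-free factor $H_{\ast}(S^1)$, giving
\begin{equation*}
H_{\mathbb{Z}_k,\,\ast}\,(E^b,E^a)\;=\;H_{\mathbb{Z}_k,\,\ast}\,(E_{\psi}^{\,b},E_{\psi}^{\,a})\otimes H_{\ast}(S^1),
\end{equation*}
which, after the common index shift by the Morse index of the quadratic at infinity part (which does not depend on the $z$-direction), yields $G_{\mathbb{Z}_k,\,\ast}^{\;\;\;(a,b]}\,(\phi)=G_{\mathbb{Z}_k,\,\ast}^{\;\;\;(a,b]}\,(\psi)\otimes H_{\ast}(S^1)$ at the level of individual contactomorphisms. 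The functoriality of this identification with respect to the monotonicity maps $\mu_{0}^{\phantom{0}1}$ is immediate, since if $\phi_0\leq\phi_1$ are both lifts then the corresponding generating functions are likewise independent of $z$ and their sublevel set inclusions split as inclusions on the $\mathbb{R}^{2n}$ factor times the identity on $S^1$.

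To obtain the domain-level statement I would then pass to the inverse limit over the cofinal family of lifts. Because $H_{\ast}(S^1)$ is finitely generated and free, the functor $-\otimes H_{\ast}(S^1)$ commutes with inverse limits, so the tensor-product decomposition survives the limit, giving the claimed formula for $G_{\mathbb{Z}_k,\,\ast}^{\;\;\;(a,b]}\,(\mathcal{U}\times S^1)$. The functoriality statement follows by applying the argument simultaneously to $\mathcal{U}_1$ and $\mathcal{U}_2$: given a cofinal family of $\mathbb{Z}_k$-equivariant Hamiltonian symplectomorphisms of $\mathcal{U}_2$ extending by zero to $\mathcal{U}_1$, the corresponding lifts provide a compatible cofinal pair, and the induced map on relative Borel homologies is $\mu_{\mathbb{Z}_k}\otimes\text{id}_{H_{\ast}(S^1)}$.

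The hardest step will be the cofinality claim for $\mathbb{Z}_k$-equivariant lifts. In \cite{mio} this is handled in the non-equivariant case by an explicit domination construction; I expect that construction to go through essentially unchanged here, since it only uses averaging in the $z$-variable and cut-offs in the $(x,y)$-variable, both of which commute with $\tau_k$, so $\mathbb{Z}_k$-equivariance is automatically preserved. A secondary point requiring care is that the generating function of a lift can indeed be chosen simultaneously $\mathbb{Z}_k$-invariant \emph{and} $z$-independent; this follows from applying the $\mathbb{Z}_k$-equivariant uniqueness theorem (Proposition \ref{equiv_uniq}) to the natural $S^1$-translation of any given $\mathbb{Z}_k$-generating function and averaging, which yields a generating function in the same $\mathbb{Z}_k$-equivalence class that is invariant under both actions.
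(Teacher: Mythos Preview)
Your proposal is correct and follows exactly the approach the paper indicates: the paper does not give a proof but simply states that the result ``can be proved as in the non-equivariant case (see \cite{mio}),'' and your outline supplies precisely those adaptations (cofinality of $\mathbb{Z}_k$-equivariant lifts, the splitting $(E^c)_{\mathbb{Z}_k}=(E_{\psi}^{\,c})_{\mathbb{Z}_k}\times S^1$ coming from triviality of the action on $S^1$, and the K\"unneth formula applied to the Borel pair).

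One small caution: the averaging argument you sketch at the very end is both unnecessary and does not work as stated. Averaging the $S^1$-translates of a generating function over $\theta$ does not in general produce a generating function for the same Legendrian, since the generating-function property is not convex. Fortunately you already have the right argument two paragraphs earlier: if $\phi$ is the lift of a $\mathbb{Z}_k$-equivariant Hamiltonian isotopy $\psi_t$, run the inductive construction of Proposition~\ref{exist} on the isotopy of lifts $\phi_t$; since every ingredient (the Greek generating functions of $\Psi_{\phi_t\phi_{t_{i-1}}^{-1}}$ and the composition formula~(\ref{a})) is then independent of $z$, the output is automatically a $\mathbb{Z}_k$-invariant, $z$-independent generating function. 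Drop the averaging remark and the argument is clean.
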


\section{Calculations for balls and the equivariant contact non-squeezing theorem} \label{calc}

In this last section we will calculate the equivariant symplectic homology of balls in $\mathbb{R}^{2n}$ and use this calculation, together with Theorem \ref{lift3_eq}, to prove the equivariant non-squeezing theorem stated in the introduction (Theorem \ref{non-squeezing}). We start by recalling Traynor's calculation of the symplectic homology of balls.

\subsection{Symplectic homology of balls}\label{balls}

In \cite{T} Traynor calculated the symplectic homology of ellipsoids in $\mathbb{R}^{2n}$. We will present here her calculation in the special case of balls $B(R)$ and of intervals of the form $(a,\infty]$. We will see in the next subsection how this calculation has to be modified in the $\mathbb{Z}_k$-equivariant case. We first define an unbounded ordered sequence supported in $B(R)$. Let $\rho: [0,\infty)\rightarrow \mathbb{R}$ be a function supported in $[0,1]$ and such that $\rho''\geq 0$, $\rho''(m)>0$ for $m$ with $\rho'(m)\in R\,\mathbb{N}$, and $\rho'|_{[0,\delta]}\equiv c<0$ for some $\delta>0$. Given such a function we define a Hamiltonian symplectomorphism $\phi_{\rho}$ of $\mathbb{R}^{2n}$ to be the time-1 map of the Hamiltonian flow of $H_{\rho}:\mathbb{R}^{2n}\rightarrow\mathbb{R}$, $(w_0,\cdots,w_{n-1})\mapsto\rho\big(\frac{\pi}{R}\Sigma|w_i|^2\big)$, thus $\phi_{\rho}(w)=e^{i\frac{2\pi}{R}\rho'(\frac{\pi}{R}|w|^2)}w$. We then take a sequence of functions $\rho_{\kappa}: [0,\infty)\rightarrow \mathbb{R}$ of the above form in such a way that $\lim_{\kappa \to \infty} \rho_{\kappa}(0) = \infty$, $\lim_{\kappa \to \infty} \rho_{\kappa}'(0) = -\infty$ and the associated $\phi_{\rho_1}\leq\phi_{\rho_2}\leq\phi_{\rho_3}\leq\cdots$ form an unbounded ordered sequence (supported in $B(R)$).\\
\\
Consider now generating functions $S_{\rho_{\kappa}}$ for $\phi_{\rho_{\kappa}}$, $\kappa=1,2,3,\cdots$. Let $m_j\in(0,1)$ be defined by $\rho_{\kappa}'(m_j)=-jR$ for $j=1,\cdots,\nu$. It can be shown (see \cite{T}) that for $j=1,\cdots,\nu$ the fixed points set $Z_j=\{\frac{\pi}{R}|w|^2=m_j\}$ corresponds to a non-degenerate critical submanifold of $S_{\rho_{\kappa}}$ (diffeomorphic to $S^{2n-1}$) with critical value $c_{j,\kappa}=jRm_j+\rho_{\kappa}(m_j)$ and index $2jn+\iota$, and that the fixed point $Z_0=\{0\}$ corresponds to a non-degenerate critical point of $S_{\rho_{\kappa}}$ with critical value $\rho_{\kappa}(0)$ and index $2(\nu+1)n+\iota$. Here $\iota$ is the index of the quadratic at infinity part of $S_{\rho_{\kappa}}$. Moreover, $c_{j,\kappa}<jR$ for all $j$ and $\kappa$, and $\lim_{\kappa \to \infty} c_{j,\kappa} = jR$. Note also that from the point of view of Morse theory the critical submanifold corresponding to $Z_{\infty}=\mathbb{R}^{2n}\setminus B(R)$ behaves as a non-degenerate critical point of index $\iota$ (and critical value $0$).\\
\\
We can now calculate $G_{\ast}^{\;\;(a,\infty]}\,(B(R))=\varprojlim G_{\ast}^{\;\;(a,\infty]}\,(\phi_{\rho_{\kappa}})$. Note first that $G_{\ast}^{\;\;(a,\infty]}\,(B(R))=G_{\ast}^{\;\;(a,\infty]}\,(\phi_{\rho_{\kappa}})$ for $\kappa$ arbitrarily big. We will then use the following facts. 
\begin{enumerate}
\item
For $a_1<a_2$ we have an exact sequence
\begin{displaymath}
\xymatrix{\cdots \ar[r] &  G_{\ast}^{\;\;(a_1,a_2]}\,(\phi_{\rho_{\kappa}})\ar[r] &  G_{\ast}^{\;\;(a_1,\infty]}\,(\phi_{\rho_{\kappa}})\ar[r] & G_{\ast}^{\;\;(a_2,\infty]}\,(\phi_{\rho_{\kappa}})\ar[r] \ar[r] & G_{\ast-1}^{\;\;(a_1,a_2]}\,(\phi_{\rho_{\kappa}}) \ar[r] & \cdots }
\end{displaymath} 
coming from the exact sequence of the triple $E^{a_1}\subset E^{a_2} \subset E$.
\item If the interval $(a_1,a_2]$ only contains the critical value $c_{j,\kappa}$ then
$$
G_{\ast}^{\;\;(a_1,a_2]}\,(\phi_{\rho_{\kappa}})=H_{\ast-2jn}(S^{2n-1})=\left\{ 
\begin{array}{l l}
 \mathbb{Z}_k & \quad \text{if}\;\; \ast=2jn\,,\,2(j+1)n-1\\
 0            & \quad \text{otherwise.}
\end{array} \right. 
$$
Indeed, by Morse-Bott theory we know that passing a non-degenerate critical submanifold of index $\lambda$ changes the topology of the sublevel sets by the attachment of a $\lambda$-disk bundle over the critical submanifold. By the Thom isomorphism, and since all critical submanifolds are diffeomorphic to $S^{2n-1}$, the relative homology of the sublevel sets is thus given by the homology of $S^{2n-1}$ shifted by $\lambda$.   
\item By the Thom isomorphism we have
$$G_{\ast}^{\;\;(-\infty,\infty]}\,(\phi_{\rho_{\kappa}})=H_{\ast+\iota}(E,E^{-\infty})=H_{\ast}(S^{2n}).$$
\end{enumerate}
Using (1)-(3) we get the following result.

\begin{tthm}[\cite{T}]\label{thm_balls}
Consider $B(R)\subset\mathbb{R}^{2n}$ and let $a$ be a positive real number. Then the symplectic homology of $B(R)$ with $\mathbb{Z}_k$-coefficients is given by
$$ G_{\ast}^{\;\;(a,\infty]}\,\big(B(R)\big)  = \left\{ 
\begin{array}{l l}
 \mathbb{Z}_k & \quad \text{if}\;\; \frac{a}{l}\leq R <\frac{a}{l-1}\\
 0            & \quad \text{otherwise}
\end{array} \right. $$
for $\ast=2nl$, where $l$ is any positive integer. For all other values of $\ast$ the corresponding homology groups are zero. Moreover, given $R$, $R'$ with $\frac{a}{l}\leq R < R'<\frac{a}{l-1}$, the homomorphism $G_{\ast}^{\;\;(a,\infty]}\,\big(B(R')\big)\longrightarrow G_{\ast}^{\;\;(a,\infty]}\,\big(B(R)\big)$ induced by the inclusion $B(R)\subset B(R')$ is an isomorphism.
\end{tthm}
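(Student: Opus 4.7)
The plan is to follow the Morse-theoretic strategy encoded in facts (1)--(3) above, with the combinatorial heart being the verification that the connecting homomorphisms of the iterated long exact sequences cancel all but one of the Morse--Bott generators. First, I would fix $R$ and $a>0$ with $a/l\le R<a/(l-1)$, so that $(l-1)R<a\le lR$, and observe that the unbounded ordered sequence $\{\phi_{\rho_\kappa}\}$ is cofinal in the directed system defining $G_*^{(a,\infty]}(B(R))$. Thus it suffices to compute $G_*^{(a,\infty]}(\phi_{\rho_\kappa})$ for $\kappa$ large enough that these groups have stabilized in the degrees of interest, the boundary case $a=lR$ being handled by a limiting argument. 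From now on fix such a $\kappa$; using $c_{j,\kappa}\to jR$, the critical values of $S_{\rho_\kappa}$ strictly above $a$ are precisely $c_{l,\kappa}<c_{l+1,\kappa}<\cdots<c_{\nu,\kappa}<\rho_\kappa(0)$.

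Next, I would choose thresholds $a=b_0<b_1<\cdots<b_{\nu-l+2}=+\infty$ so that each subinterval $(b_{i-1},b_i]$ contains exactly one critical value, and iterate the long exact sequence from fact (1). Fact (2) identifies the contribution of each subinterval containing $c_{j,\kappa}$ with $H_{*-2jn}(S^{2n-1})$, yielding $\mathbb{Z}_k$-generators in degrees $2jn$ and $2(j+1)n-1$ for $j=l,l+1,\dots,\nu$; the top subinterval contributes a single $\mathbb{Z}_k$-generator in degree $2(\nu+1)n$ from the isolated critical point $Z_0$. To decide which of these survive in $G_*^{(a,\infty]}(\phi_{\rho_\kappa})$ I would invoke fact (3), which gives $G_*^{(-\infty,\infty]}(\phi_{\rho_\kappa})\cong H_*(S^{2n})$ concentrated in degrees $0$ and $2n$, together with the long exact sequence
\[
\cdots\to G_*^{(-\infty,a]}\to H_*(S^{2n})\to G_*^{(a,\infty]}\to G_{*-1}^{(-\infty,a]}\to\cdots
\]
Running the analogous Morse--Bott count for $G_*^{(-\infty,a]}$ (whose generators come from $Z_\infty$ in degree $0$ and from $Z_j$ in degrees $2jn$ and $2(j+1)n-1$ for $1\le j\le l-1$) and arguing by induction on $l$, the connecting homomorphisms must pair off the Morse--Bott generators in consecutive degrees $(2(l+i)n-1,2(l+i)n)$ for $i=1,\dots,\nu-l+1$. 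This kills $2(\nu-l+1)$ of the $2(\nu-l+1)+1$ generators, leaving exactly one survivor at $*=2ln$.

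For the functoriality statement, if $R<R'$ both lie in $[a/l,a/(l-1))$, then the sequence $\{\phi_{\rho_\kappa}\}$ built inside $B(R)$ also sits inside $\mathrm{Cont}^c(B(R'))$, and the set of critical values of $S_{\rho_\kappa}$ lying above $a$ is unchanged when we regard $\phi_{\rho_\kappa}$ as supported in $B(R')$. Hence the surviving generator in degree $2ln$ is represented by the same critical submanifold $Z_l$ in both cases, and functoriality of the inverse-limit construction shows that the inclusion-induced homomorphism $G_*^{(a,\infty]}(B(R'))\to G_*^{(a,\infty]}(B(R))$ is an isomorphism.

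The main obstacle is the combinatorial step in the second paragraph: one must check that the connecting maps of the iterated long exact sequences really do pair adjacent Morse--Bott generators $(2(l+i)n-1,2(l+i)n)$ as claimed, rather than vanishing or having unexpected images. This is forced by the total constraint $H_*(S^{2n})$ from fact (3) combined with the inductively determined structure of $G_*^{(-\infty,a]}$, but making the cancellations proceed in the right order requires careful diagram-chasing through the iterated long exact sequences.
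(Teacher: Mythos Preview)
Your proposal is correct and follows essentially the same approach as the paper, which itself only sketches the argument by listing facts (1)--(3) and asserting that the result follows (the theorem being attributed to Traynor). Your expansion of this sketch---iterating the long exact sequences from (1), using (2) to identify the Morse--Bott contributions, and using the global constraint $H_\ast(S^{2n})$ from (3) to force the connecting maps to cancel adjacent generators in degrees $2(j+1)n-1$ and $2(j+1)n$---is exactly the intended mechanism, and the obstacle you flag is the genuine content of the computation.
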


\subsection{Equivariant homology of balls}\label{balls_eq}

We continue to work in the setting of the previous subsection. Note that all the $\rho_{\kappa}$ are $\mathbb{Z}_k$-equivariant, so $G_{\mathbb{Z}_k,\,\ast}^{\;\;\;(a,\infty]}\,(B(R))=\varprojlim G_{\mathbb{Z}_k,\,\ast}^{\;\;\;(a,\infty]}\,(\phi_{\rho_{\kappa}})$. As before we have that $G_{\mathbb{Z}_k,\,\ast}^{\;\;\;(a,\infty]}\,(B(R))=G_{\mathbb{Z}_k,\,\ast}^{\;\;\;(a,\infty]}\,(\phi_{\rho_{\kappa}})$ for $\kappa$ big enough. Note that for $a_1<a_2$ we still have an exact sequence
\begin{displaymath}
\xymatrix{\cdots \ar[r] &  G_{\mathbb{Z}_k,\,\ast}^{\;\;\;(a_1,a_2]}\,(\phi_{\rho_{\kappa}})\ar[r] &  G_{\mathbb{Z}_k,\,\ast}^{\;\;\;(a_1,\infty]}\,(\phi_{\rho_{\kappa}})\ar[r] & G_{\mathbb{Z}_k,\,\ast}^{\;\;\;(a_2,\infty]}\,(\phi_{\rho_{\kappa}})\ar[r] \ar[r] & G_{\mathbb{Z}_k,\,\ast-1}^{\;\;\;(a_1,a_2]}\,(\phi_{\rho_{\kappa}}) \ar[r] & \cdots }
\end{displaymath} 
Note also that if the interval $(a_1,a_2]$ contains only the critical value $c_{j,\kappa}$ then, by the equivariant Thom isomorphism (\ref{eq_thom}) and since the action is free near the critical submanifold with critical value $c_{j,\kappa}$,  $G_{\mathbb{Z}_k,\,\ast}^{\;\;\;(a,\infty]}\,(\phi_{\rho_{\kappa}})$ is given by the homology of the quotient of the critical submanifold by the action. Thus we have
$$
G_{\mathbb{Z}_k,\,\ast}^{\;\;\;(a_1,a_2]}\,(\phi_{\rho_{\kappa}})=H_{\ast-2jn}(L^{2n-1})=\left\{ 
\begin{array}{l l}
 \mathbb{Z}_k & \quad \text{if}\;\; \ast=2jn\,,\,2jn+1\,,\,\cdots\,,\,2(j+1)n-1\\
 0            & \quad \text{otherwise.}
\end{array} \right. 
$$
Moreover we have $G_{\mathbb{Z}_k,\,\ast}^{\;\;\;(\rho_{\kappa}(0)-\epsilon,\infty]}\,(B(R))=0$ for all $\ast$ smaller than the index of the critical value $\rho_{\kappa}(0)$. Using these observations we can now prove the following theorem.

\begin{tthm}\label{thm_balls_eq}
The $\mathbb{Z}_k$-equivariant symplectic homology of $B(R)$ with $\mathbb{Z}_k$-coefficients and with respect to the interval $(a,\infty]$  for $a>0$ is given by
$$G_{\mathbb{Z}_k,\,\ast}^{\;\;\;(a,\infty]}\,\big(B(R)\big)  = \left\{ 
\begin{array}{l l}
 \mathbb{Z}_k & \quad \text{if}\;\; R\geq\frac{a}{l} \quad \text{and}\;\;2nl\leq\ast<2n(l+1)-1\\
 0            & \quad \text{otherwise}
\end{array} \right. $$
where $l$ is any positive integer. Moreover, given $R$, $R'$ with $\frac{a}{l}\leq R < R'$, the homomorphism $G_{\mathbb{Z}_k,\,\ast}^{\;\;\;(a,\infty]}\,\big(B(R')\big)\longrightarrow\; G_{\mathbb{Z}_k,\,\ast}^{\;\;\;(a,\infty]}\,\big(B(R)\big)$ induced by the inclusion $B(R)\subset B(R')$ is in fact an isomorphism.
\end{tthm}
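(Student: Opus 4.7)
I would mirror Traynor's non-equivariant proof from Section \ref{balls}, replacing ordinary homology of each critical submanifold by its $\mathbb{Z}_k$-equivariant homology. First I reduce to the finite computation of $G_{\mathbb{Z}_k,\,\ast}^{\;\;\;(a,\infty]}(\phi_{\rho_\kappa})$ for $\kappa$ large, exactly as in the non-equivariant case. For $\kappa$ sufficiently large, the critical values of $S_{\rho_\kappa}$ lying in $(a,\infty]$ are the $c_{j,\kappa}$ with $j\ge l$ (where $l$ is the smallest positive integer satisfying $R\ge a/l$) together with $\rho_\kappa(0)$. The latter has Morse index $2(\nu+1)n$, which tends to infinity with $\kappa$, so by the observation already made in the excerpt its contribution is pushed beyond any fixed degree and does not survive in the inverse limit.

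Each $Z_j\cong S^{2n-1}$ with $j\ge 1$ carries a free $\mathbb{Z}_k$-action with quotient $L^{2n-1}$, so the equivariant Thom isomorphism (\ref{eq_thom}) applied to the negative eigenbundle of the Hessian of $S_{\rho_\kappa}$ along $Z_j$ identifies the contribution of passing through $c_{j,\kappa}$ with $H_{\ast-2jn}(L^{2n-1};\mathbb{Z}_k)$, which equals $\mathbb{Z}_k$ in each of the $2n$ degrees of $[2jn,2(j+1)n-1]$, as already recorded in the excerpt. I then assemble these single-level contributions using the iterated long exact sequence stated there, working downward from $c_{\nu,\kappa}$ to $c_{l,\kappa}$. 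By degree considerations on $H_\ast(L^{2n-1};\mathbb{Z}_k)$ (which is concentrated in degrees $[0,2n-1]$) and on the Morse-Bott shift $2n$ between adjacent levels, the only potentially non-zero component of the connecting homomorphism $\delta$ between two adjacent levels lives in the single degree $\ast=2(j+1)n-1$ and carries the $H_0$-class of $Z_{j+1}$ to the $H_{2n-1}$-class of $Z_j$.

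The crucial technical task is then to identify these $\delta$. I would do this by comparison with the non-equivariant calculation of Theorem \ref{thm_balls} via the natural transformation from equivariant to non-equivariant relative homology induced by the projection $(E^b,E^a)_{\mathbb{Z}_k}\to (E^b,E^a)/\mathbb{Z}_k$, combined with the fact that the non-equivariant $\delta$ has to be an isomorphism $\mathbb{Z}_k\to\mathbb{Z}_k$ for the non-equivariant answer to come out as a single $\mathbb{Z}_k$ in degree $2nl$. Propagating this information through the Borel construction pins down the equivariant $\delta$ in the one degree where it can be non-zero and, combined with the richer structure of $H_\ast(L^{2n-1};\mathbb{Z}_k)$, leaves precisely the asserted generators: $\mathbb{Z}_k$ in every degree $\ast\in[2nl,2n(l+1)-2]$ for each $l$ with $R\ge a/l$, with the gap at $\ast=2n(l+1)-1$ created by the cancellation of the top class of $Z_l$. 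The monotonicity/isomorphism assertion follows by naturality: the inclusion $B(R)\subset B(R')$ induces a compatible morphism of the inverse systems, of the Morse-Bott decompositions and of the long exact sequences, restricting to an isomorphism on each surviving $\mathbb{Z}_k$-summand.

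The \textbf{main obstacle} is the precise identification of the equivariant connecting homomorphisms and the bookkeeping required when iterating over infinitely many levels in the inverse limit. One must verify that the combination of the internal structure of $H_\ast(L^{2n-1};\mathbb{Z}_k)$ with the $\delta$-cancellations produces exactly the pattern in the statement, rather than the naive ``top/bottom cancellation'' of the non-equivariant picture or the fully uncancelled picture obtained by setting $\delta=0$; this requires a careful analysis of the equivariant gradient flow moduli between adjacent critical lens spaces, together with a clean comparison with Traynor's answer through the Borel construction.
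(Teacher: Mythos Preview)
Your overall strategy mirrors the paper's: both isolate the hard case $\ast=2nl$ with two adjacent critical levels present, and both pin down the inter-level boundary by invoking Traynor's non-equivariant answer. The difference is in how that comparison is executed, and your version has a genuine gap. The paper does not try to compare the two connecting homomorphisms directly. Instead it perturbs $S_{\rho_\kappa}$ near each $S^{2n-1}$ critical submanifold by a $\mathbb{Z}_k$-invariant Morse function, obtaining an explicit $\mathbb{Z}_k$-equivariant Morse chain complex whose chain groups are all copies of the group ring $\mathbb{Z}_k[T]/(T^k-1)$. The intra-level differentials are those of the standard equivariant cell decomposition of $S^{2n-1}$; the single unknown inter-level differential is a $\mathbb{Z}_k[\mathbb{Z}_k]$-module map, hence multiplication by some polynomial in $T$, and the non-equivariant vanishing $G^{(a,a']}_{2nl}\big(B(R)\big)=0$ from Theorem~\ref{thm_balls} forces this polynomial to be the norm $1+T+\cdots+T^{k-1}$. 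Since the action is free on the region in question, the equivariant homology is the homology of the coinvariant complex (set $T=1$), where multiplication by the norm becomes multiplication by $k=0$; this yields $G_{\mathbb{Z}_k,2nl}^{\;\;\;(a,a']}=\mathbb{Z}_k$.

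Your proposed comparison ``via the projection $(E^b,E^a)_{\mathbb{Z}_k}\to (E^b,E^a)/\mathbb{Z}_k$'' does not give a map to non-equivariant homology: for a free action this projection is a homotopy equivalence, identifying Borel homology with ordinary homology of the quotient, not with $H_\ast(E^b,E^a)$. To relate $\delta_{\mathbb{Z}_k}$ to the non-equivariant $\delta$ one would need the covering projection $\pi_\ast$ or the transfer, but over $\mathbb{Z}_k$ their composite is multiplication by $k=0$, so neither determines one connecting map from the other. Indeed the paper's computation shows that $\delta_{\mathbb{Z}_k}=0$ while the non-equivariant $\delta$ is an isomorphism, so your expected ``cancellation of the top class of $Z_l$'' does not occur equivariantly. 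The chain-level group-ring argument is precisely what makes the comparison with Traynor's theorem effective, and is the ingredient your sketch is missing.
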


For our application in the next subsection it will be sufficient to consider homology groups of degree $\ast=2nl$. In this case we have that $G_{\mathbb{Z}_k,\,\ast}^{\;\;\;(a,\infty]}\,\big(B(R)\big)$ does not vanishes for all $R\geq\frac{a}{l}$, in contrast with the case of $G_{\ast}^{\;\;\;(a,\infty]}\,\big(B(R)\big)$ that for $\ast=2nl$ does not vanishes only in the subinterval $\frac{a}{l}\leq R <\frac{a}{l-1}$. This difference is responsible for the different form of the non-squeezing theorem in the equivariant and non-equivariant \cite{EKP, mio} case.

\begin{proof}
The only case that does not follow immediately from the discussion above is when $\ast=2nl$ and $a\leq(l-1)R$. In this case we have $G_{\mathbb{Z}_k,\,\ast}^{\;\;\;(a,\infty]}\,\big(B(R)\big)=G_{\mathbb{Z}_k,\,\ast}^{\;\;\;(a,a']}\,\big(B(R)\big)$
with $lR<a'<(l+1)R$, but the exact sequences for $a<a''<\infty$ and $a''<a'<\infty$ with $(l-1)R<a''<lR$ do not allow us to conclude, since both $G_{\mathbb{Z}_k,\,\ast}^{\;\;\;(a'',a']}\,\big(B(R)\big)$ and $G_{\mathbb{Z}_k,\,\ast-1}^{\;\;\;(a,a'']}\,\big(B(R)\big)$ do not vanish. To get the result we will follow the approach of Morse homology for generating functions, as introduced by Milinkovi\'{c} \cite{M,Mil97}. In order to turn the generating function of $\rho_{\kappa}$ into a ($\mathbb{Z}_k$-invariant) Morse function we will perturb it by a $\mathbb{Z}_k$-invariant Morse function $f$ on $S^{2n-1}$ with $k$ critical points $\{\,a_{0,j}^0,\cdots,a_{k-1,j}^0\,\}$ of index $2j$ and $k$ critical points $\{\,a_{0,j}^1,\cdots,a_{k-1,j}^1\,\}$ of index $2j+1$ for each $j=0,\cdots,n-1$ (see \cite[p26]{Milnor}). We can assume the critical points are numbered in such a way that the $\mathbb{Z}_k$-action sends $a_{\kappa,j}^{\nu}$ to $a_{\kappa+m_j,j}^{\nu}$. Then, after  identifying $a_{\kappa,j}^{\nu}$ with $T^k$, the Morse complex of $f$ is 
\begin{align*}
0\; \longrightarrow 
\dfrac{\mathbb{Z}_k[T]}{T^k-1} &\xrightarrow{\cdot(T^{m_{n-1}}-1)}
\dfrac{\mathbb{Z}_k[T]}{T^k-1} \xrightarrow{(\ast)}
\dfrac{\mathbb{Z}_k[T]}{T^k-1} \xrightarrow{\cdot(T^{m_{n-2}}-1)}
\dfrac{\mathbb{Z}_k[T]}{T^k-1} \xrightarrow{(\ast)}\\
&\cdots\longrightarrow
\dfrac{\mathbb{Z}_k[T]}{T^k-1} \xrightarrow{\cdot(T^{m_0}-1)}
\dfrac{\mathbb{Z}_k[T]}{T^k-1} \longrightarrow \;0
\end{align*} 
and the $\mathbb{Z}_k$-action for generators of index $2j$ and $2j+1$ is given by multiplication by $T^{m_j}$. Since the above complex calculates $H_{\ast}(S^{2n-1})$ we know that the boundary maps $(\ast)$ must be multiplication by $T^{k-1}+\cdots+T+1$. Now we use the function $f$ to perturb $S_{\rho_{\kappa}}$ inside small tubular neighborhoods of the $S^{2n-1}$ critical submanifolds of index $2nl$ and $2n(l-1)$. Then the complex calculating 
$G_{\ast}^{\;\;(a,a']}\,\big(B(R)\big)$ is
\begin{align*}
0\; \longrightarrow 
\dfrac{\mathbb{Z}_k[T]}{T^k-1} \xrightarrow{\cdot(T^{m_{n-1}}-1)}
&\dfrac{\mathbb{Z}_k[T]}{T^k-1} \xrightarrow{\cdot(T^{k-1}+\cdots+T+1)}
\dfrac{\mathbb{Z}_k[T]}{T^k-1} \xrightarrow{\cdot(T^{m_{n-2}}-1)}
\dfrac{\mathbb{Z}_k[T]}{T^k-1} \xrightarrow{\cdot(T^{k-1}+\cdots+T+1)}\\
&\cdots\; \longrightarrow 
\dfrac{\mathbb{Z}_k[T]}{T^k-1} \xrightarrow{\cdot(T^{m_0}-1)}
\dfrac{\mathbb{Z}_k[T]}{T^k-1} \xrightarrow{(\ast\ast)}\\
\dfrac{\mathbb{Z}_k[T]}{T^k-1} \xrightarrow{\cdot(T^{m_{n-1}}-1)}
&\dfrac{\mathbb{Z}_k[T]}{T^k-1} \xrightarrow{\cdot(T^{k-1}+\cdots+T+1)}
\dfrac{\mathbb{Z}_k[T]}{T^k-1} \xrightarrow{\cdot(T^{m_{n-2}}-1)}
\dfrac{\mathbb{Z}_k[T]}{T^k-1} \xrightarrow{\cdot(T^{k-1}+\cdots+T+1)}\\
&\cdots\; \longrightarrow 
\dfrac{\mathbb{Z}_k[T]}{T^k-1} \xrightarrow{\cdot(T^{m_0}-1)}
\dfrac{\mathbb{Z}_k[T]}{T^k-1} \xrightarrow\;0\\
\end{align*} 
Since, by Theorem \ref{thm_balls}, we have $G_{\ast}^{\;\;(a,a']}\,\big(B(R)\big)=0$ for $\ast=2nl$ the map $(\ast\ast)$ must be multiplication by $T^{k-1}+\cdots+T+1$. Note that the $\mathbb{Z}_k$-action is free on $S_{\rho_{\kappa}}^{\phantom{\rho_{\kappa}}-1}\big((a,a']\big)$, so that the equivariant homology $G_{\mathbb{Z}_k,\,\ast}^{\;\;\;(a,a']}\,\big(B(R)\big)$ is obtained by taking the homology of the quotient of the above chain complex by the $\mathbb{Z}_k$-action. We obtain thus that $G_{\mathbb{Z}_k,\,\ast}^{\;\;\;(a,a']}\,\big(B(R)\big)=\mathbb{Z}_k$ for  $\ast=2nl$, as we wanted.
\end{proof}

\subsection{The equivariant non-squeezing theorem}\label{ultima}

We will now use Theorems \ref{lift3_eq} and \ref{thm_balls_eq} to prove Theorem \ref{non-squeezing}. Suppose that there is a $\mathbb{Z}_k$-equivariant contact isotopy of $\mathbb{R}^{2n}\times S^1$ squeezing $\widehat{B(R)}$ into $\widehat{B(R')}$ for $R'$ arbitrarily small. Then in particular we can find a $\mathbb{Z}_k$-equivariant contactomorphism $\psi$ of $\mathbb{R}^{2n}\times S^1$, supported in some big $\widehat{B(R'')}$ and isotopic to the identity through $\mathbb{Z}_k$-equivariant contactomorphisms, such that $\psi\,\big(\widehat{B(R)}\big)\subset\widehat{B(R')}$. Consider the following diagram
\begin{displaymath}
\xymatrix{
 G_{\mathbb{Z}_k,\,\ast}^{\;\;(1,\infty]}\,(\widehat{B(R'')}) \ar[r] &
 G_{\mathbb{Z}_k,\,\ast}^{\;\;(1,\infty]}\,(\widehat{B(R)})  \\
 G_{\mathbb{Z}_k,\,\ast}^{\;\;(1,\infty]}\,(\widehat{B(R'')}) \ar[u]^{\psi_{\mathbb{Z}_k,\,\ast}} \ar[r] & 
 G_{\mathbb{Z}_k,\,\ast}^{\;\;(1,\infty]}\,(\widehat{B(R')}) \ar[r] & 
 G_{\mathbb{Z}_k,\,\ast}^{\;\;(1,\infty]}\,\big(\psi(\widehat{B(R)})\big) \ar[ul]_{\psi_{\mathbb{Z}_k,\,\ast}}}.
\end{displaymath}
where the horizontal maps are homomorphisms induced by the inclusion of the corresponding domains (see Theorem \ref{monot_eq}) and the vertical ones are isomorphisms induced by $\psi$ (see Theorem \ref{cont_inv_eq}). Since $R'$ is arbitrarily small we can find a positive integer $l$ with $R'<\frac{1}{l}<R$. Consider now $\ast=2nl$. Then by Theorem \ref{lift3_eq} and \ref{thm_balls_eq} we know that $G_{\mathbb{Z}_k,\,\ast}^{\;\;(1,\infty]}\,(\widehat{B(R')})=0$, and that the horizontal map on the top is not the 0-homomorphism. Thus the diagram yields a contradiction and we get the desired result.\\
\\


\begin{thebibliography}{99}



\bibitem[BH04]{BH} A.~Banyaga and D.~Hurtubise, \textit{Lectures on Morse homology}, Kluwer Texts in the Mathematical Sciences, 29, Kluwer Academic Publishers Group, Dordrecht (2004).


\bibitem[Bh01]{B}
M.~Bhupal, A partial order on the group of contactomorphisms of $\mathbb{R}^{2n+1}$ via generating functions, \textit{Turkish J. Math.} \textbf{25} (2001), 125--135.


\bibitem[Chap84]{Ch1}
M.~Chaperon, Une id\'{e}e du type \textquotedblleft g\'{e}od\'{e}siques bris\'{e}es\textquotedblright  pour les syst\'{e}mes hamiltoniens, \textit{C. R. Acad. Sci. Paris}, S\'{e}r. I Math. \textbf{298} (1984), 293--296. 

\bibitem[Chap95]{Ch} 
M.~Chaperon, On generating families, in \textit{The Floer Memorial Volume} (H. Hofer et al., eds.),
(Progr. Math., vol. 133) Birkhauser, Basel 1995, pp. 283--296.

\bibitem[Chek96]{C}
Y.~Chekanov, Critical points of quasi-functions and generating families of Legendrian manifolds, \textit{Funct. Anal. Appl.} \textbf{30} (1996), 118--128.


\bibitem[CN10a]{CN} V.~Chernov and S.~Nemirovski, Legendrian links, causality, and the Low conjecture, Geom. Funct. Anal. \textbf{19} (2010) 1320--1333.

\bibitem[CN10b]{CN09} V.~Chernov and S.~Nemirovski, Non-negative Legendrian isotopy in $ST^*M$, Geom. Topol. \textbf{14} (2010), 611--626.


\bibitem[CFP10]{CFP} V.~Colin, E.~Ferrand and P.~Pushkar, Positive isotopies of Legendrian submanifolds and applications, \textit{arXiv:1004.5263}.


\bibitem[El91]{E} Y.~Eliashberg, New invariants of open symplectic and contact manifolds, \textit{J. Amer. Math. Soc.} \textbf{4} (1991), 513--520.


\bibitem[EKP06]{EKP}
Y.~Eliashberg, S.S.~Kim and L.~Polterovich, Geometry of contact transformations and domains: orderability vs squeezing, \textit{Geom. and Topol.} \textbf{10} (2006), 1635--1747.

\bibitem[EP00]{EP}
Y.~Eliashberg and L.~Polterovich, Partially ordered groups and geometry of contact transformations, \textit{Geom. Funct. Anal.} \textbf{10} (2000), 1448--1476.




\bibitem[Giv90]{G} A.~Givental, Nonlinear generalization of the Maslov index, in \textit{Theory
of singularities and its applications}, pp. 71--103, Adv. Soviet Math., 1, Amer. Math. Soc., Providence, RI, 1990.






\bibitem[LS85]{LS} F.~Laudenbach and J.C.~Sikorav, Persistance d'intersection avec la section nulle au cours d'une isotopie hamiltonienne dans un fibre cotangent, \textit{Invent. Math.} \textbf{82} (1985), 349--357.


\bibitem[M08]{Milin}
I.~Milin, Orderability of Contactomorphism Groups of Lens Spaces, \textit{PhD thesis, Stanford University} (2008).

\bibitem[Mil97]{Mil97}
D.~Milinkovi\'{c}, Floer Homology and Stable Morse Homology in Symplectic Geometry, PhD thesis, University of Wisconsin-Madison (1997).

\bibitem[Mil99]{M}
D.~Milinkovi\'{c}, Morse homology for generating functions of Lagrangian submanifolds, \textit{Trans. Amer. Math. Soc.}, \textbf{351} (1999), 3953--3974.

\bibitem[Miln56]{M_art} J.~Milnor, Construction of universal bundles I and II, \textit{Ann. Math.}, \textbf{63} (1956), 272--284 and 430--436.

\bibitem[Miln]{Milnor}
J.~Milnor, \textit{Morse Theory}, Annals of Mathematics Studies vol. 51,
Princeton University Press (1963).



\bibitem[S11]{mio}
S.~Sandon, Contact Homology, Capacity and Non-Squeezing in $\mathbb{R}^{2n}\times S^{1}$ via Generating Functions, \textit{Ann. Inst. Fourier (Grenoble)} \textbf{61} (2011), 145--185.


\bibitem[Sik86]{S}
J.C.~Sikorav, Sur les immersions lagrangiennes dans un fibr\'{e} cotangent admettant une phase g\'{e}n\'{e}ratrice globale, \textit{C.R. Acad. Sci. Paris}, S\'{e}r. I Math. \textbf{302} (1986), 119--122.

\bibitem[Sik87]{S2}
J.C.~Sikorav, Problemes d'intersections et de points fixes en g\'{e}om\'{e}trie hamiltonienne, \textit{Comment. Math. Helv.} \textbf{62} (1987), 62--73.


\bibitem[Th95]{Th}
D.~Th\'{e}ret, Utilisation des fonctions g\'{e}n\'{e}ratrices en g\'{e}om\'{e}trie symplectique globale, Ph.D. Thesis, Universit\'{e} Denis Diderot (Paris 7), 1995.


\bibitem[Th99]{Th2}
D.~Th\'{e}ret, A complete proof of Viterbo's uniqueness theorem on generating functions, \textit{Topology Appl.} \textbf{96} (1999), 249-266.

\bibitem[tDieck]{tD}
T.~tom Dieck, \textit{Transformation groups}, de Gruyter Studies in Mathematics 8 (1987).


\bibitem[Tr94]{T}
L.~Traynor, Symplectic Homology via generating functions, \textit{Geom. Funct. Anal.} \textbf{4} (1994), 718--748.



\bibitem[Vit92]{V}
C.~Viterbo, Symplectic topology as the geometry of generating functions, \textit{Math. Ann.} \textbf{292} (1992), 685--710.


\end{thebibliography}
\end{document}